\newtheorem{theorem}{Theorem}[section]
\newtheorem{corollary}[theorem]{Corollary}
\newtheorem{proposition}[theorem]{Proposition}
\theoremstyle{definition}
\newtheorem{definition}[theorem]{Definition}
\theoremstyle{remark}
\newtheorem{problem}[]{Problem}
\newtheorem{conjecture}{Conjecture}
\newtheorem{observation}{Observation}
\newcommand{\sv}{\chi_{svtla}}
\newcommand{\se}{\chi_{setla}}
\title{Super Total Local Antimagic Coloring of Graphs}
\author{Ravindra Pawar}
\author{Tarkeshwar Singh}
\thanks{Ravindra Pawar: p20200020@goa.bits-pilani.ac.in}
\thanks{Tarkeshwar Singh: tksingh@goa.bits-pilani.ac.in}
\begin{document}

\maketitle

\begin{abstract}
Let $G = (V,E)$ be a finite simple undirected graph without isolated vertices. A bijective map $f: V \cup E \rightarrow \{1,2, \dots, |V|+ |E| \}$ is called total local antimagic labeling if for each edge $uv \in E, w(u) \ne w(v)$, where $w(v)$ is a weight of a vertex $v$ defined by $w(v) = \sum_{x \in NT(v)} f(x)$, where $NT(v) = N(v) \cup \{uv: uv\in E\}$ is the total open neighborhood of a vertex $v$. Further, $f$ is called \textit{super vertex total local antimagic labeling} or \textit{super edge total local antimagic labeling} if $f(V) = \{1,2, \dots, |V|\}$ or $f(E) = \{1,2, \dots, |E|\}$, respectively. The labeling $f$ induces a proper vertex coloring of $G$. The \textit{super vertex (edge) total local antimagic chromatic number} of a graph $G$ is the minimum number of colors used over all colorings of $G$ induced by super vertex (edge) total local antimagic labeling of $G$. In this paper, we discuss the super vertex (edge) total local antimagic chromatic number of some families of graphs.
\end{abstract}
{\setlength{\parindent}{0cm}\textbf{Keywords:} Antimagic Graph, Local Antimagic Graph, Local Antimagic Chromatic Number, Super Vertex (Edge) Total Labeling.}\\
\\
\textbf{AMS Subject Classifications: 05C 78.}
\section{Introduction}
Throughout the paper, we assume that $G = (V,E)$ is a simple finite undirected graph without isolated vertices. By a \textit{labeling} of a graph, we mean a one-to-one correspondence between the labeling set and the elements of a graph (vertices or edges or both).
Accordingly, there are many variants of graph labelings viz vertex labeling, edge labeling, super vertex/edge labeling, etc.  We refer West \cite{west} for graph theoretic terminology and notations.\\

Hartsfield and Ringel \cite{pearls_in_graph_theory} introduced the concept of \textit{antimagic labeling} of a graph $G$: let $f:E \to \{1,2, \dots, |E|\}$ be a bijective map. The weight of a vertex $v$ is defined by $w(v):= \sum_{uv \in E}f(uv)$ and for given any pair of vertices $u,v$ their weights are distinct, i.e. $w(u) \ne w(v)$. They conjectured that ``any graph $G$ other than $K_2$ is antimagic," which is open till date. \\

Arumugam et al. \cite{lac_arumugam} and Bensmail et al. \cite{lac_bensmail} independently introduced a local version of antimagic labeling of a graph $G$. The edge labeling $f$ is said to be local antimagic labeling if $w(u) \ne w(v)$, whenever $uv$ is an edge in $G$.\\

Once we treat vertex weights as colors, the local antimagic labeling naturally induces a proper vertex colouring. The \textit{local antimagic chromatic number} of a graph $G$, denoted by $\chi_{la}(G)$ is the minimum number of colors used over all colorings of $G$ induced by local antimagic labeling of $G$.\\

A bijective map $f: V \cup E \to \{1,2,\dots, |V| + |E|\}$ is called total labeling.
Recently researchers extended the notion of local antimagic chromatic number by calculating weights of vertices induced by local antimagic total labelings of graphs, where $w(v) =f(v) + \sum_{uv \in E} f(uv)$ (see,  \cite{tlac_hadiputra, tlac_slamin, tlac_putri}).\\

Motivated by the above, we define the following:
\begin{definition}
The \textit{total open neighborhood} of a vertex $u \in V$ is denoted by $NT(u)$ and is given by $NT(u) = N(u) \cup \{uv: uv\in E\}$, where $N(u)$ is an open neighborhood of a vertex $u$ in a graph $G$ or in other words the total open neighborhood of a vertex $u$ in a graph is the collection of all vertices (except $u$) adjacent to $u$ together with all the edges incident to $u$.
\end{definition}
 The total closed neighborhood of a vertex $u$, denoted by $NT[u]$ is obtained by adding $u$ to $NT(u)$, i.e. $NT[u] = NT(u) \cup \{uv: uv \in E\}$.\\
 
\begin{definition}
Given a graph $G= (V,E)$. A bijective map $f: V \cup E \rightarrow \{1,2, \dots, |V| + |E| \}$ is called total local antimagic labeling if for each edge $uv \in E, w(u) \ne w(v)$, where $w(v)$ is a weight of $v$ given by $w(v) = \sum_{x \in NT(v)} f(x)$. Such a function $f$ is called a super vertex total local antimagic labeling if  $f(V) = \{1,2, \dots, |V|\}$ and it is called super edge total local antimagic labeling if $f(E) = \{1,2, \dots, |E|\}$.
\end{definition}

A super vertex (edge) total local antimagic labeling induces a proper vertex coloring of $G$ by considering the vertex weights as colours. Hence, we define the super vertex (edge) total local antimagic chromatic number as follows :\\

The \textit{super vertex total local antimagic chromatic number} of a graph $G$, denoted by $\sv(G)$, is the minimum number of colors used over all colorings of $G$ induced by super vertex total local antimagic labeling of $G$.\\

The \textit{super edge total local antimagic chromatic number} of a graph $G$, denoted by $\se(G)$, is the minimum number of colors used over all colorings of $G$ induced by super edge total local antimagic labeling of $G$. 
By definition we have $\chi(G) \le \sv(G)$ and $\chi(G) \le \se(G)$, respectively.\\

We abbreviate ``super vertex total local antimagic" as ``svtla" and ``super edge total local antimagic" as ``setla". In this paper, we discuss the super vertex (edge) total local antimagic chromatic number for a graph and study it for some graph families. 
This notion is a natural extension of the existing local antimagic chromatic number of graphs.\\

\begin{definition}
A magic rectangle $MR(m,n)$ of size $m \times n$ is a rectangular arrangement of first $mn$ natural numbers such that the sum of all entries in each row is the same and the sum of all entries in each column is the same.
\end{definition}
Harmuth gave the following theorem \cite{mr1}, which gives the necessary and sufficient conditions for the existence of a magic rectangle of a given order.
\begin{theorem}\cite{mr1} \label{mr}
A magic rectangle $MR(a,b)$ exists if and only if $a,b > 1$, $ab>4$, and $a \equiv b(\bmod\ 2)$.
\end{theorem}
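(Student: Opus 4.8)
The plan is to prove the two implications separately. Set $N = ab$; since the entries of a putative $MR(a,b)$ are a permutation of $1, \ldots, N$, their total is $\tfrac{N(N+1)}{2}$, and as the $a$ row sums coincide each must equal $R := \tfrac{b(ab+1)}{2}$, while each of the $b$ column sums must equal $C := \tfrac{a(ab+1)}{2}$. Necessity will come from demanding $R, C \in \mathbb{Z}$ together with two easy degenerate checks; sufficiency will be by explicit construction, organized by the parity of $\min(a,b)$.

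For necessity, suppose $a \not\equiv b \pmod 2$, say $a$ even and $b$ odd (the other case is symmetric). Then $ab$ is even, so $ab+1$ is odd, and $R = \tfrac{b(ab+1)}{2}$ is half of a product of two odd numbers, hence not an integer, a contradiction; the case $a$ odd, $b$ even is excluded in the same way using $C$. Hence $a \equiv b \pmod 2$. If $a = 1$, every column consists of a single label, so equality of all column sums forces all of $1, \ldots, b$ to be equal, impossible for $b \ge 2$; thus $a, b > 1$. Finally, the only pair with $a, b > 1$, $a \equiv b \pmod 2$, and $ab \le 4$ is $(2,2)$, and there $R = 5$ forces the two rows to be $\{1,4\}$ and $\{2,3\}$, after which whichever way the columns are formed their sums are $\{3,7\}$ or $\{4,6\}$, never equal; so $MR(2,2)$ does not exist and $ab > 4$ is necessary.

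For sufficiency I would produce, for every admissible $(a,b)$ (and, by transposition, assuming $a \le b$), an explicit doubly-magic arrangement, checking in each case that the placement is a bijection onto $\{1, \ldots, ab\}$ and that the row sums all equal $R$ and the column sums all equal $C$. The natural building blocks are: small seeds such as the order-$3$ magic square $MR(3,3)$, the rectangles $MR(2,2k)$ for $k \ge 2$ obtained by distributing the complementary pairs $\{i,\, 4k + 1 - i\}$ one per column and splitting them between the two rows in a balanced way, and a few further small odd rectangles like $MR(3,5)$; a Kronecker-type product lemma, namely that if $MR(m_1,n_1) = (a_{ij})$ and $MR(m_2,n_2) = (b_{k\ell})$ exist then the $m_1m_2 \times n_1 n_2$ array with entries $m_2 n_2(a_{ij}-1) + b_{k\ell}$ is again magic, since its entries clearly exhaust $\{1, \ldots, m_1m_2n_1n_2\}$ bijectively and every row (column) sum splits into a fixed multiple of a row (column) sum of the first factor plus a fixed multiple of one of the second; and a bordering/augmentation operation that turns an $MR(m,n)$ into an $MR(m, n+2)$ by uniformly shifting the old block and adjoining two new columns carrying the $2m$ freed-up extreme labels, arranged as complementary pairs across the rows. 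Running these operations from the seeds should reach every $(a,b)$ with $a, b > 1$, $ab > 4$, $a \equiv b \pmod 2$: the even--even pairs via bordering and products of the $2 \times 2k$ (and $4 \times 4$) seeds, and the odd--odd pairs via bordering and products of the small odd seeds.

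The step I expect to be the main obstacle is the bordering/augmentation lemma, because a strip appended to a magic rectangle must preserve the magic property in both directions at once: its column sums are prescribed by the required increment and its row sums must be constant, so the strip is itself essentially a small magic-rectangle-type object rather than something one can choose freely, and a parity count on the complementary-pair increments shows that a single uniform scheme does not survive all residues of $m$, so one is forced either to invoke an already-constructed rectangle for the strip or to give a separate ad hoc arrangement, sometimes bordering in both directions simultaneously. A secondary, purely bookkeeping obstacle is to verify that the chosen seeds plus these operations genuinely cover all admissible dimensions, with the thin rectangles $2 \times n$ and $3 \times n$ (where the product construction alone is powerless) needing the most care.
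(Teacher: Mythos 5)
You should first note that the paper contains no proof of this statement at all: Theorem~\ref{mr} is Harmuth's classical result on magic rectangles and is simply quoted from \cite{mr1}, so there is no in-paper argument to match yours against; the expected justification in this context is the citation. Judged on its own terms, the necessity half of your proposal is correct and complete: integrality of the row sum $b(ab+1)/2$ and the column sum $a(ab+1)/2$ forces $a\equiv b\pmod 2$, the case $a=1$ (or $b=1$) is rightly excluded because equal singleton column sums are impossible, and your exhaustive check that $MR(2,2)$ fails is the standard argument.

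The sufficiency half, however, has a genuine gap, which you partly acknowledge. The Kronecker-type product lemma is fine as stated (the entries $m_2n_2(a_{ij}-1)+b_{k\ell}$ exhaust $\{1,\dots,m_1m_2n_1n_2\}$ and all row and column sums are constant), but products cannot produce rectangles with a prime side, so the entire construction rests on the bordering step $MR(m,n)\Rightarrow MR(m,n+2)$ together with transposition --- and that step is never proved. Carrying it out: shift the old block by $m$, so the two new columns must carry $\{1,\dots,m\}\cup\{mn+m+1,\dots,mn+2m\}$; the required row increment works out to exactly the complementary-pair sum $mn+2m+1$, so each row must receive one pair split across the two new columns, and equality of the two new column sums then amounts to choosing one element from each pair $\{r,\,mn+2m+1-r\}$ so that the choices sum to $m(mn+2m+1)/2$. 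The obvious balanced choice succeeds only for $m\equiv 0\pmod 4$, and the other residues (including the odd $m$ needed for the odd-by-odd rectangles) require separate ad hoc assignments; the same issue already bites your seed family $MR(2,2k)$ when $k$ is odd, where the ``balanced'' split of complementary pairs does not exist and an unbalanced one must be exhibited. So as written the coverage claim is fine \emph{modulo} a lemma that is precisely the hard, case-splitting core of Harmuth's theorem; without it the sufficiency direction is a plan rather than a proof, which is why the paper cites the result instead of proving it. If you intend a self-contained proof, you must either prove the bordering lemma residue by residue or replace it by one of the known explicit constructions from the magic-rectangle literature.
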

If $G$ admit a svtla labeling $f$, then the sum $\sum_{x \in V } w(x)$ counts the label of each vertex $v$ exactly $deg(v)$ times. Also, the label of an edge $ e = uv$ is counted only in $w(u)$ and $w(v)$, i.e. exactly twice. Then we have the following observations.
\begin{observation} \label{ob:sv-counting}
If $G$ admits svtla labeling $f$ then $\sum_{x \in V} w(x) = \sum_{v \in V} deg(v)f(v) + 2\sum_{e \in E} f(e)$.
\end{observation}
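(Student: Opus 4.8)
The plan is to prove the identity by a straightforward double-counting argument: expand the definition of $w(x)$, interchange the order of summation, and count multiplicities. First I would split $NT(x)$ into its vertex part $N(x)$ and its edge part $\{xy : xy \in E\}$, writing $w(x) = \sum_{y \in N(x)} f(y) + \sum_{xy \in E} f(xy)$. These two pieces are disjoint (the index sets involve vertices on one hand and edges on the other, and indeed $f$ assigns them labels from disjoint ranges since $f$ is a bijection onto $\{1,\dots,|V|+|E|\}$), so summing over $x \in V$ separates the total $\sum_{x \in V} w(x)$ into a ``vertex contribution'' $\sum_{x \in V}\sum_{y \in N(x)} f(y)$ and an ``edge contribution'' $\sum_{x \in V}\sum_{xy \in E} f(xy)$.

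Next I would evaluate each contribution by counting, for each label, how many times it is summed. For the vertex contribution, a fixed vertex $y$ contributes $f(y)$ once for every $x$ with $y \in N(x)$, that is, once for each of the $deg(y)$ neighbours of $y$; hence $\sum_{x \in V}\sum_{y \in N(x)} f(y) = \sum_{y \in V} deg(y)\, f(y)$. For the edge contribution, a fixed edge $e = uv$ is summed exactly when $x = u$ and when $x = v$, i.e. exactly twice; hence $\sum_{x \in V}\sum_{xy \in E} f(xy) = 2\sum_{e \in E} f(e)$. Adding the two evaluated contributions yields $\sum_{x \in V} w(x) = \sum_{v \in V} deg(v) f(v) + 2\sum_{e \in E} f(e)$, as claimed.

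There is no genuine obstacle here; the only thing requiring care is the bookkeeping — ensuring the decomposition of $NT(x)$ is into disjoint parts so that nothing is double- or under-counted, and correctly pinning down the multiplicity of each vertex label (which is $deg(v)$) and of each edge label (which is $2$). I would also note that the argument uses neither the antimagic/proper-coloring condition nor the ``super'' restriction $f(V) = \{1,\dots,|V|\}$: the identity holds for any total labeling, and is recorded here only because this is the setting in which it will subsequently be used.
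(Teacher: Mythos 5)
Your double-counting argument is correct and is essentially the paper's own justification, which notes that each vertex label $f(v)$ is counted $deg(v)$ times and each edge label exactly twice in $\sum_{x\in V} w(x)$. One tiny aside: the disjointness of the two parts of $NT(x)$ follows simply because vertices and edges are distinct objects, not from the label ranges being disjoint (they need not be unless $f$ is super), but this does not affect the validity of your proof.
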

We have a similar observation for setla graph.
\begin{observation} \label{ob:se-counting}
If $G$ admits a setla labeling $f$ then $\sum_{x \in V \cup E} w(x) = \sum_{v \in V}deg(v)f(v) + 2 \sum_{e \in E} f(e)$.
\end{observation}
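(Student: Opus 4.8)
The plan is to prove the identity by double counting the quantity $\sum_{v\in V} w(v)$, tracking the total contribution of each individual label $f(x)$, $x \in V \cup E$, to this sum. Since the weight function $w(v) = \sum_{x \in NT(v)} f(x)$ is formed in exactly the same way for a setla labeling as for an svtla labeling --- the ``super'' hypotheses only prescribe which block of consecutive integers is assigned to $V$ and which to $E$, not how weights are computed --- the argument will be verbatim the one sketched just before Observation~\ref{ob:sv-counting}.

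First I would expand $\sum_{v\in V} w(v) = \sum_{v \in V}\sum_{x \in NT(v)} f(x)$ and interchange the order of summation, rewriting the total as $\sum_{x \in V \cup E} c(x)\, f(x)$, where $c(x) = |\{v \in V : x \in NT(v)\}|$ is the number of total open neighborhoods containing $x$. Next I would evaluate $c(x)$ in the two cases. If $x = u$ is a vertex, then $u \in NT(v)$ precisely when $u \in N(v)$, i.e. precisely when $uv \in E$; the number of such $v$ is $deg(u)$, so $c(u) = deg(u)$. If $x = e = ab$ is an edge, then $e \in NT(v)$ precisely when $e$ is incident with $v$, i.e. precisely when $v \in \{a,b\}$; since $G$ is simple this happens for exactly the two endpoints, so $c(e) = 2$. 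Substituting gives $\sum_{v\in V} w(v) = \sum_{u \in V} deg(u)\, f(u) + 2\sum_{e \in E} f(e)$, which is the claimed formula.

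The argument is purely a counting identity and carries no real obstacle; the only points requiring a moment of care are (i) using the simplicity of $G$ to be sure each edge is incident with exactly two distinct vertices, so that no loop or multi-edge inflates the count, and (ii) reading the left-hand side correctly --- since $w$ is defined only on vertices, the sum $\sum_{x \in V \cup E} w(x)$ is to be understood as $\sum_{v \in V} w(v)$, exactly as in Observation~\ref{ob:sv-counting}. It is worth emphasising that the identity does not invoke the local antimagic (proper-colouring) condition at all: it holds for every total labeling of $G$, and the svtla/setla distinction becomes relevant only later, when one substitutes the structure of the two consecutive-integer blocks to extract bounds on the corresponding chromatic parameters.
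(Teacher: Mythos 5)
Your proposal is correct and follows essentially the same counting argument the paper sketches just before Observations~\ref{ob:sv-counting} and~\ref{ob:se-counting}: each vertex label is counted $deg(v)$ times and each edge label exactly twice in the sum of the vertex weights. Your remark that the left-hand side $\sum_{x \in V \cup E} w(x)$ should be read as $\sum_{v \in V} w(v)$ (since $w$ is defined only on vertices) matches the paper's evident intent, and the observation indeed holds for any total labeling, independent of the setla condition.
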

In Section \ref{sec:svtla}, we study the super vertex total local antimagic labeling of graphs and in Section \ref{sec:setla}, we study super edge total local antimagic labeling of graphs.
\section{Super Vertex Total Local Antimagic Labeling} \label{sec:svtla}
\begin{proposition} \label{th:sv-leaves}
For any svtla graph $G$ with a vertex $v$ having the largest number of pendent vertices $l$, \(\sv(G) \ge l+1\).
\end{proposition}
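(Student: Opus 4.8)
The plan is to exploit the very restricted total neighborhood of a pendant vertex. Write $u_1,\dots,u_l$ for the $l$ pendant vertices adjacent to $v$, and let $e_i = vu_i$ denote the corresponding pendant edges. Since each $u_i$ has degree one with unique neighbor $v$, its total open neighborhood is $NT(u_i) = \{v\} \cup \{e_i\}$, and therefore $w(u_i) = f(v) + f(e_i)$ for every $i$.

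Next I would use bijectivity: because $f \colon V \cup E \to \{1,\dots,|V|+|E|\}$ is one-to-one, the labels $f(e_1),\dots,f(e_l)$ are pairwise distinct, and adding the common constant $f(v)$ preserves distinctness. Hence $w(u_1),\dots,w(u_l)$ are $l$ pairwise distinct colors in the coloring induced by $f$.

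To produce an $(l+1)$st color I would invoke the defining property of a total local antimagic labeling: each $e_i = vu_i$ is an edge of $G$, so $w(v) \ne w(u_i)$ for every $i$. Thus $w(v)$ is a color distinct from all $l$ leaf colors, and the coloring induced by $f$ uses at least $l+1$ colors. Since $f$ was an arbitrary svtla labeling of $G$, minimizing the number of colors over all such labelings yields $\sv(G) \ge l+1$.

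I do not expect a genuine obstacle here; the only points requiring care are reading $NT(u_i)$ correctly off the definition (a pendant vertex contributes exactly one neighbor and one incident edge) and noting that the argument is uniform over all svtla labelings, so the bound is on the chromatic parameter itself. Observe that the ``super'' hypothesis and the counting identities in Observations~\ref{ob:sv-counting}--\ref{ob:se-counting} play no role: only the injectivity of $f$ on the pendant edges and the properness of the induced coloring at $v$ are used.
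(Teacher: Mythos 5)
Your argument is correct and is essentially the paper's own proof: both compute $w(u_i)=f(v)+f(vu_i)$ for the pendant vertices, use injectivity of $f$ to get $l$ distinct leaf weights, and use the local antimagic condition on the edges $vu_i$ to add $w(v)$ as an $(l+1)$st color. Your write-up just makes the role of $NT(u_i)$ and the quantification over all svtla labelings more explicit; no substantive difference.
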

\begin{proof}
Let $G$ be a graph on $n$ vertices and $v_1, v_2, \dots, v_l$ be the pendent vertices adjacent to \(v\). Let $f$ be any svtla labeling of $G$. Then the  weights of pendent vertices \(w(v_i) = f(v) + f(vv_i)\) are all distinct and $w(v) \ne w(v_i)$ for each $ i, \; 1 \le i \le l$. Hence, $f$ induces a proper $l+1$ vertex coloring of $G$. This proves the proposition.
\end{proof}

The following corollary is evident by Proposition \ref{th:sv-leaves}.
\begin{corollary} \label{th:sv-star}
For star $\sv(K_{1,n}) = n + 1$.
\end{corollary}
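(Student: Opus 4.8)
The statement $\sv(K_{1,n}) = n+1$ has two halves: a lower bound $\sv(K_{1,n}) \ge n+1$ and a matching construction $\sv(K_{1,n}) \le n+1$. The lower bound is immediate from Proposition~\ref{th:sv-leaves}: the center vertex of $K_{1,n}$ has exactly $n$ pendent vertices, so plugging $l = n$ into the proposition gives $\sv(K_{1,n}) \ge n+1$. (One should also note in passing that $K_{1,n}$ is indeed a svtla graph, i.e.\ that a svtla labeling exists at all, which will follow from the construction below.) The real content is therefore the upper bound, and the plan is to exhibit an explicit svtla labeling of $K_{1,n}$ whose induced coloring uses at most $n+1$ colors — hence exactly $n+1$, combining the two bounds.

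**The construction.** Label the center $c$ and the leaves $v_1, \dots, v_n$, with pendent edges $e_i = cv_i$. A svtla labeling must assign $f(V) = \{1, 2, \dots, n+1\}$ to the $n+1$ vertices and $f(E) = \{n+2, \dots, 2n+1\}$ to the $n$ edges (bijectively). Set $f(c) = n+1$ (or any fixed vertex label — the choice of which vertex label goes on the center is free, so I would pick it to make the arithmetic cleanest), assign the remaining vertex labels $1, \dots, n$ to the leaves arbitrarily, and assign the edge labels $n+2, \dots, 2n+1$ to the edges $e_1, \dots, e_n$ arbitrarily. Then $w(v_i) = f(c) + f(e_i)$, and since the $f(e_i)$ are $n$ distinct values, the leaf weights $w(v_i)$ are $n$ distinct values; meanwhile $w(c) = \sum_{i} f(v_i) + \sum_i f(e_i)$ is some single value. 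So the induced coloring uses at most $n+1$ colors (the $n$ distinct leaf weights plus possibly the center weight). It remains only to check that this $f$ is a \emph{valid} svtla labeling, i.e.\ that $w(c) \ne w(v_i)$ for every $i$ — but this is forced by Proposition~\ref{th:sv-leaves}'s argument: adjacent vertices get distinct weights because $w(v_i) = f(c) + f(e_i)$ is small compared to $w(c) = \sum f(v_j) + \sum f(e_j)$ when $n \ge 2$, and the $n = 1$ case ($K_{1,1} = K_2$) should be handled separately or noted as trivial ($w(c) = f(c) + f(e) $ versus $w(v_1) = f(c) + f(e)$ — wait, these would be equal, so $K_2$ admits no svtla labeling; but $K_{1,1}$ has one pendent vertex and the formula would give $2$, so one must be careful: check whether the paper intends $n \ge 2$).

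**Main obstacle.** The only subtlety is the $w(c) \ne w(v_i)$ verification and the degenerate small cases. For $n \ge 2$ one has $w(c) = \sum_{j=1}^n f(v_j) + \sum_{j=1}^n f(e_j) \ge (1 + 2) + (n+2) > f(c) + f(e_i) = w(v_i)$ for any $i$ once one observes $f(c) \le n+1$ and $f(e_i) \le 2n+1$, so a crude size estimate suffices — no clever labeling is needed, which is why the construction can be "arbitrary." I would write the inequality out once: $w(v_i) \le (n+1) + (2n+1) = 3n+2$ while $w(c) \ge 1 + 2 + \cdots$ grows faster, and in fact it is cleaner to just note $w(c) - w(v_i) = \sum_{j \ne i} (f(v_j) + f(e_j)) + f(v_i) - f(c) $, and argue the first sum already dominates. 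For $n = 1$, i.e.\ $K_2$, I would remark that no svtla labeling exists (the two vertices are forced to have equal weight $f(c) + f(e)$), so the corollary is understood to apply for $n \ge 2$; the proof should state this restriction explicitly. Thus the proof is short: cite Proposition~\ref{th:sv-leaves} for $\ge n+1$, give the labeling for $\le n+1$, and dispatch the weight-distinctness with one inequality.
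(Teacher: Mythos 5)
Your proof follows essentially the same route as the paper: the lower bound $\sv(K_{1,n})\ge n+1$ from Proposition~\ref{th:sv-leaves}, and an explicit labeling with $f(c)=n+1$, leaf labels $1,\dots,n$ and edge labels $n+2,\dots,2n+1$ for the upper bound (the paper fixes $f(v_i)=i$, $f(cv_i)=n+1+i$; you allow an arbitrary assignment and add the size estimate $w(c)>w(v_i)$, which the paper leaves implicit). That part is fine.

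The one genuine error is your side remark on $n=1$. You compute $w(c)=f(c)+f(e)$ for $K_2$ and conclude the two weights coincide, so that $K_2$ admits no svtla labeling and the corollary should be restricted to $n\ge 2$. But the weight is taken over the \emph{total open} neighborhood, which does not include the vertex itself: for $K_2$ with vertices $c,v_1$ and edge $e$, one has $w(c)=f(v_1)+f(e)$ and $w(v_1)=f(c)+f(e)$, and these differ because $f(c)\ne f(v_1)$. Hence $K_{1,1}=K_2$ does admit an svtla labeling with two colors (indeed the paper records $\sv(P_2)=2$ in Theorem~\ref{th:sv-path}), and the corollary holds for all $n\ge 1$ with no restriction; your main construction covers $n\ge 2$ and the $n=1$ case is the trivial one just described, not an exception.
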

\begin{proof}
By Proposition~\ref{th:sv-leaves}, $\sv(K_{1,n}) \ge n+1$. We define super vertex total local antimgaic labeling of $K_{1,n}$  as $f(c) = n+1$ and $f(v_i) = i$ and $f(cv_i) = n+1+i $  for each $i, \; 1\leq i \leq n $ as shown in Figure~\ref{fig:sv-star}, therefore $\sv(K_{1,n}) \le n+1$. This proves that $\sv(K_{1,n}) = n+1$.  
\end{proof}
\begin{figure}[ht]
    \centering
    \includegraphics[scale = 0.5]{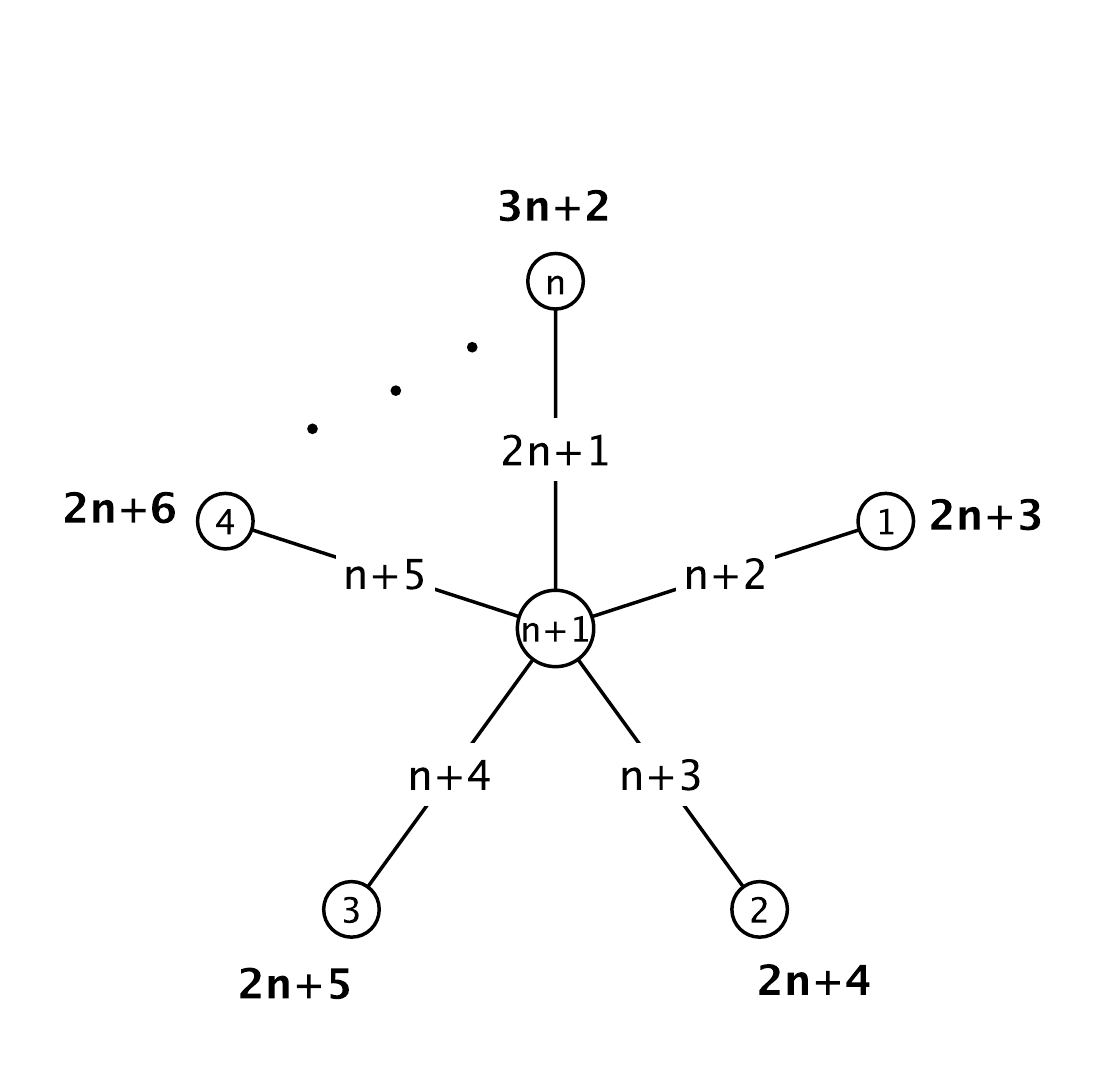}
    \caption{svtla labeling of a star.}
    \label{fig:sv-star}
\end{figure}
\begin{corollary}
If a tree $T$  with the largest number of pendent vertices equals $l$ at a vertex, $\sv(T) \ge l+1$.
\end{corollary}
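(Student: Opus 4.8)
The plan is to read this statement off directly from Proposition~\ref{th:sv-leaves}, since a tree is nothing but a special kind of graph. A tree $T$ on at least two vertices is in particular a finite simple undirected graph without isolated vertices, so the hypotheses under which Proposition~\ref{th:sv-leaves} was proved are available for $T$. First I would fix a vertex $v$ of $T$ attaining the maximum number $l$ of pendent neighbours among all vertices of $T$; such a $v$ exists and this $l$ is exactly the quantity named in the statement.

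Next, for an arbitrary svtla labeling $f$ of $T$, I would invoke the argument of Proposition~\ref{th:sv-leaves} verbatim: if $v_1,\dots,v_l$ are the pendent vertices adjacent to $v$, then $w(v_i)=f(v)+f(vv_i)$ are pairwise distinct (the $f(vv_i)$ are distinct) and $w(v)\neq w(v_i)$ for each $i$, so $f$ induces a proper colouring of $T$ using at least $l+1$ colours. Since $f$ was an arbitrary svtla labeling, taking the minimum over all of them yields $\sv(T)\ge l+1$.

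The only point worth flagging is the degenerate case: $\sv(T)$ is meaningful only when $T$ admits at least one svtla labeling, and if it does not then the asserted inequality holds vacuously; in neither case is anything beyond Proposition~\ref{th:sv-leaves} needed. Consequently there is no genuine obstacle here — the sole thing to notice is that a pendent vertex of a tree is handled by the weight computation of Proposition~\ref{th:sv-leaves} in exactly the same way as a pendent vertex of an arbitrary graph, so no tree-specific construction or estimate enters. (If one wanted the bound to be non-vacuous for every tree, one would separately exhibit an explicit svtla labeling of an arbitrary tree, for instance by a leaf-stripping or breadth-first-layer assignment of the labels $1,\dots,|V|$ to vertices and $|V|+1,\dots,|V|+|E|$ to edges; but that is independent of the inequality stated here.)
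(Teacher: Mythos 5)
Your proposal is correct and matches the paper's intent exactly: the paper gives no separate argument for this corollary because it is an immediate specialization of Proposition~\ref{th:sv-leaves} to trees, which is precisely what you do. Your remark about the vacuous case when no svtla labeling exists is a reasonable extra caution but not needed beyond the proposition itself.
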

\begin{theorem} \label{th:sv-path}
For a path $P_n$ with $n \ge 3$, $3 \le \sv(P_n) \le 5$.
\end{theorem}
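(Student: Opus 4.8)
The plan is to treat the two bounds separately; the lower bound is short, and essentially all the work is in the upper bound.

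For $\sv(P_n)\ge 3$, write $P_n=v_1v_2\cdots v_n$. When $n=3$ the central vertex $v_2$ carries two pendent vertices, so Proposition~\ref{th:sv-leaves} already gives $\sv(P_3)\ge 3$. For $n\ge 4$ I would argue by contradiction: if some svtla labeling $f$ induced only two colours, then, $P_n$ being connected and bipartite, the colour classes must be the two parts $\{v_1,v_3,\dots\}$ and $\{v_2,v_4,\dots\}$, so in particular $w(v_1)=w(v_3)$. Since $w(v_1)=f(v_2)+f(v_1v_2)$ and $w(v_3)=f(v_2)+f(v_4)+f(v_2v_3)+f(v_3v_4)$, this forces $f(v_1v_2)=f(v_4)+f(v_2v_3)+f(v_3v_4)$. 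The left side is an edge label, hence at most $|V|+|E|=2n-1$, while the right side is at least $1+(n+1)+(n+2)=2n+4$ because $f(v_4)\ge 1$ and $f(v_2v_3),f(v_3v_4)$ are two distinct edge labels, each $\ge n+1$; this contradiction gives $\sv(P_n)\ge 3$.

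For the upper bound, when $n\le 5$ it suffices to exhibit one svtla labeling of $P_n$: since $P_n$ then has at most five vertices, any colouring it induces uses at most five colours (and this simultaneously shows $P_n$ is svtla). So the real task is $n\ge 6$, where I would give an explicit svtla labeling whose induced colouring uses at most five colours. The guiding idea is to assign the vertex labels by a simple rule (for instance $f(v_i)=i$, or a rule depending on $i\bmod 2$) and then to choose the edge labels from $\{n+1,\dots,2n-1\}$ so that the ``window'' $f(v_{i-1}v_i)+f(v_iv_{i+1})$ corrects the vertex contribution $f(v_{i-1})+f(v_{i+1})$ in the interior weight $w(v_i)=f(v_{i-1})+f(v_{i+1})+f(v_{i-1}v_i)+f(v_iv_{i+1})$; concretely one arranges the odd-position and even-position edge labels into two arithmetic-progression-like subsequences whose running sums make $w(v_i)$ constant on even indices and constant on odd indices. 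The interior vertices then realise only two colours, the two pendent vertices $v_1,v_n$ at most two more, and the slack in the bound absorbs a bounded number of boundary vertices where the period-$2$ pattern has to be broken, for at most five colours in all. One must also confirm the colouring is proper, i.e.\ that the two interior values differ and that $w(v_1),w(v_n)$ differ from their neighbours' weights; this follows from the size gap between vertex labels ($\le n$) and edge labels ($\ge n+1$).

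I expect the main obstacle to be the bookkeeping in the upper bound: producing one uniform scheme — or a small fixed family of schemes indexed by $n\bmod 4$ — that simultaneously (i) is a bijection splitting correctly as $f(V)=\{1,\dots,n\}$, $f(E)=\{n+1,\dots,2n-1\}$, (ii) forces the interior weights into at most two values with only $O(1)$ exceptions near the ends, and (iii) keeps the colouring proper at the two pendent vertices. Once the pattern is pinned down, the verification reduces to a handful of routine arithmetic identities, which I would split according to the parity (or the residue mod $4$) of $n$.
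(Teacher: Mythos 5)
Your lower-bound argument is sound and is essentially the paper's: the paper also compares $w(v_1)$ and $w(v_3)$ and derives the same numerical contradiction $f(e_1)\le 2n-1 < 2n+4 \le f(v_4)+f(e_2)+f(e_3)$ (it does so a little more directly, by noting $w(v_1),w(v_2),w(v_3)$ are pairwise distinct, without needing the uniqueness of the $2$-colouring of a connected bipartite graph, but your route is equally valid).

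The upper bound, however, is where essentially all of the content of the theorem lies, and there you have only a plan, not a proof. You exhibit no labeling at all: for $3\le n\le 5$ you say it ``suffices to exhibit one svtla labeling'' but never exhibit it (that such a labeling exists is exactly what must be shown), and for $n\ge 6$ the scheme is described only at the level of intent, with no verification of bijectivity, of the claimed two interior weight values, or of how many boundary exceptions occur. Moreover, one of your justifications is wrong: properness on the interior does \emph{not} follow from the size gap between vertex and edge labels. With the most natural choice you suggest, $f(v_i)=i$ and monotone edge labels $f(e_i)=2n-i$, every interior weight equals $(i-1)+(i+1)+(2n-i+1)+(2n-i)=4n+1$, so adjacent interior vertices receive the same colour and the labeling is not svtla. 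Avoiding exactly this collapse is the point of the paper's construction: it perturbs the vertex labels in a period-$4$ pattern (cases split by $n\bmod 4$, with a special adjustment at $i=n$ when $n\equiv 3\pmod 4$) against $f(e_i)=2n-i$, so that interior weights alternate between $4n$ and $4n+2$, and then checks by explicit computation that $w(v_1)$, $w(v_n)$ (and, in one case, $w(v_{n-1})$) add at most three further colours, giving the bounds $3$, $4$ or $5$ according to $n\bmod 4$. Until you pin down such a pattern and carry out that arithmetic, the inequality $\sv(P_n)\le 5$ is not established.
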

\begin{proof}
It is easy to see that $\sv(P_2)=2$ and $\sv(P_3) = 3$ as shown in Figure \ref{fig:sv-path}. Let $n \ge 4$ and $P_n$ be path on $n$ vertices with the vertex set $\{v_1,v_2, \dots, v_n\}$ and $e_i = v_iv_{i+1}$ be edges. First, we will prove the lower bound. Let $f$ be svtla labeling of $P_n$. Then $w(v_1) = f(e_1) + f(v_2)$ and $w(v_3) = f(v_2) + f(v_4) + f(e_2) + f(e_3)$. If $w(v_1) = w(v_3)$ then we get $f(e_1) = f(e_2) + f(e_3) + f(v_4)$. Now the value of $f(e_1)$ is at most $2n-1$ and if we assign minimum labels to $e_2, e_3, v_4$ we get $f(e_2) + f(e_3) + f(v_4) = 2n + 4 > 2n-1 = f(e_1)$. This contradiction proves that $w(v_1) \ne w(v_3)$. Also, $w(v_1) \ne w(v_2)$ and $w(v_2) \ne w(v_3)$ since $v_1v_2, v_2v_3 \in E(P_n)$ and $f$ is svtla labeling. Hence, for $n \ge 4$,
\begin{equation} \label{eq: svtlaPn}
    \chi_{svtla}(P_n) \ge 3.
\end{equation}
\\
Now for the upper bound, we have the following two cases:\\  
\\
{\bf Case-1:} When $n \equiv 3(\bmod~4)$, then we define super vertex total labeling $f$ by\\
\begin{align*}
        f(v_i) &= \begin{cases}
                i-1   &\text{ if } i \equiv 0(\bmod~4)\\
                i   &\text{ if } i \equiv 1 \mbox{ or } 2(\bmod~4)\\
                i+1   &\text{if } i \equiv 3(\bmod~4) \text{ and } i \ne n\\
                n  &\text{if } i=n
                \end{cases} 
\end{align*}
and $f(e_i) = 2n-i$. Therefore, $w(v_1) = f(v_2) + f(e_1) = 2+(2n-1) = 2n+1$, $w(v_{n-1}) = f(v_{n-2}) + f(v_{n}) + f(e_{n-2}) + f(e_{n-1}) = (n-2)+n+(2n-n+2)+(2n-n+1) = 4n+1$, and $w(v_n) = f(v_{n-1}) + f(e_{n-1}) = (n-1)+(2n-n+1) = 2n$. Now for each $i,~ 2 \le i \le n-1$,
\begin{align*}
    w(v_i) &= f(v_{i-1}) + f(v_{i+1}) + f(e_{i-1}) + f(e_{i})\\
           &= \begin{cases}
           i + (i+1) + (2n-(i-1)) + (2n-i) \quad & \text{if $i \equiv 0(\bmod~4)$}\\
           (i-2)+(i+1) + (2n-(i-1)) + (2n-i) \quad & \text{if $i \equiv 1(\bmod~4)$}\\
           (i-1)+(i+2) + (2n-(i-1)) + (2n-i) \quad & \text{if $i \equiv 2(\bmod~4)$}\\
           (i-1)+i + (2n-(i-1)) + (2n-i) \quad & \text{if $i \equiv 3(\bmod~4)$}
           \end{cases}\\
           &= \begin{cases}
           4n & \text{if $i$ is odd}\\
           4n+2 & \text{if $i$ is even}.
           \end{cases}
\end{align*} 
Therefore, weights of adjacent vertices are distinct, i.e. $f$ is svtla labeling of $P_n$, and it induces proper $5$-coloring of $P_n$.\\
\\
{\bf Case-2:} When $n \not \equiv 3(\bmod~4)$, then we  define super vertex total labeling $f$ by\\
\begin{align*}
        f(v_i) &= \begin{cases}
                i-1  \quad & \text{ if } i \equiv 0(\bmod~4) \text{ and }\\
                i  \quad & \text{ if } i \equiv 1 \mbox{ or }2(\bmod~4) \text{ and }\\
                i+1  \quad & \text{if } i \equiv 3(\bmod~4).
                \end{cases} 
\end{align*}
and $f(e_i) = 2n-i$. Therefore, $w(v_1) = f(v_2) + f(e_1) = 2+(2n-1) = 2n+1$,
\begin{align*}
    w(v_n)  &= f(v_{n-1}) + f(e_{n-1})\\
            &= \begin{cases}
            n + (2n - (n - 1)) & \text{if $n \equiv 0(\bmod~4)$}\\
            (n-2) + (2n - (n - 1)) & \text{if $n \equiv 1(\bmod~4)$}\\
            (n-1) + (2n - (n - 1)) & \text{if $n \equiv 2 \text{ or }3(\bmod~4)$}
            \end{cases}\\
            &= \begin{cases}
            2n+1 & \text{if $n \equiv 0(\bmod~4)$}\\
            2n-1 & \text{if $n \equiv 1(\bmod~4)$}\\
            2n & \text{if $n \equiv 2 \text{ or } 3(\bmod~4)$},
            \end{cases}
\end{align*}
and for each $i,~ 2 \le i \le n-1$,
\begin{align*}
    w(v_i) &= f(v_{i-1}) + f(v_{i+1}) + f(e_{i-1}) + f(e_{i})\\
           &= f(v_{i-1}) + f(v_{i+1}) + (2n-(i-1)) + (2n-i))\\
           &= (4n-2i+1) + f(v_{i-1}) + f(v_{i+1})\\
           &= \begin{cases}
           (4n-2i+1) + i + (i+1) & \text{if $i \equiv 0(\bmod~4)$}\\
           (4n-2i+1) + (i-2)+(i+1) & \text{if $i \equiv 1(\bmod~4)$}\\
           (4n-2i+1) + (i-1)+(i+2) & \text{if $i \equiv 2(\bmod~4)$}\\
           (4n-2i+1) + (i-1)+i & \text{if $i \equiv 3(\bmod~4)$}
           \end{cases}\\
           &= \begin{cases}
           4n & \text{if $i$ is odd}\\
           4n+2 & \text{if $i$ is even}.
           \end{cases}
\end{align*}
Consider the following two sub-cases:\\
\\
\textbf{Subcase-(i):} When $n \equiv 0(\bmod~4), w(v_1) = w(v_n) = 2n+1$, and for each  $i,\; 2 \le i \le n-1, w(v_i) = 4n$ or $4n+2$. Hence, $\sv(P_n) \le 3$. Also, by Equation \ref{eq: svtlaPn}, $\sv(P_n) \ge 3$. Therefore, $\sv(P_n) = 3$.\\
\\
\textbf{Subcase-(ii):} When $n \equiv 1 \text{ or }2(\bmod~4)$, $w(v_1) = 2n+1, w(v_n) = 2n-1$ and for each $i, \; 2\le i \le n-1, w(v_i) = 4n \text{ or } 4n+2$. Therefore $\sv(P_n) \le 4$.\\
\\
Hence,  $\sv(P_n) = 3$, when $n \equiv 0(\bmod~4)$ and
\begin{equation*}
\sv(P_n) \le
\begin{cases}
4 & \text{ if } n \equiv 1 \text{ or }2(\bmod~4)\\
5 & \text{ if } n \equiv 3(\bmod~{4}).
\end{cases}
\end{equation*}
This completes the proof.
\end{proof}
\begin{figure}
    \centering
    \includegraphics[scale = 0.6]{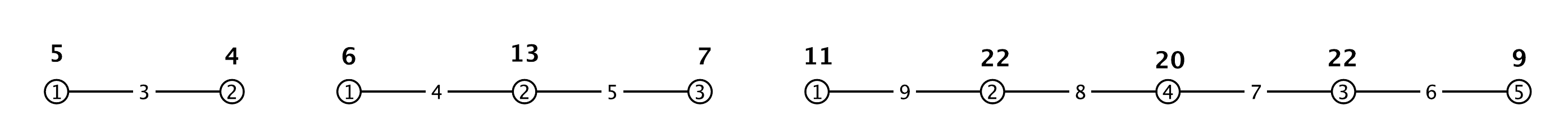}
    \caption{svtla of $P_2, P_3$ and $P_5$.}
    \label{fig:sv-path}
\end{figure}
\begin{theorem} \label{th:sv-cycle}
For  a cycle $C_n$ with $ n \ge 3,\; \sv(C_n) \le 4$.
\end{theorem}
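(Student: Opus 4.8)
The plan is to adapt the explicit construction from Theorem~\ref{th:sv-path}. Write $V(C_n)=\{v_1,\dots,v_n\}$ and $e_i=v_iv_{i+1}$, all indices read modulo $n$, so that $e_n=v_nv_1$. For $n\ge 4$ I would label the edges in decreasing order, $f(e_i)=2n+1-i$ for $1\le i\le n$, and the vertices in almost increasing order with a transposition inside each block of four consecutive positions:
\[
f(v_i)=\begin{cases} i-1 & \text{if } i\equiv 0\pmod 4,\\ i & \text{if } i\equiv 1 \text{ or } 2 \pmod 4,\\ i+1 & \text{if } i\equiv 3\pmod 4,\end{cases}
\]
with the single correction $f(v_n)=n$ when $n\equiv 3\pmod 4$. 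The case $n=3$ I would settle by hand: any bijection making the three sums $f(v_1)+f(e_2)$, $f(v_2)+f(e_3)$, $f(v_3)+f(e_1)$ pairwise distinct works, and gives $\sv(C_3)=3=\chi(C_3)$.

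First I would check that $f$ is an admissible labelling: $f(E)=\{n+1,\dots,2n\}$ is clear, and $f(V)=\{1,\dots,n\}$ holds because the only departure from the identity is that, within each complete block of four positions, the labels $4k+3$ and $4k+4$ are interchanged; these blocks exhaust $\{1,\dots,n\}$ exactly when $n\not\equiv 3\pmod 4$, and when $n\equiv 3\pmod 4$ the leftover index $n$ is absorbed by the correction $f(v_n)=n$. Next, for the ``interior'' vertices $2\le i\le n-1$ I would compute $f(v_{i-1})+f(v_{i+1})=2i-1$ for $i$ odd and $2i+1$ for $i$ even, while $f(e_{i-1})+f(e_i)=4n+3-2i$; the two linear pieces have opposite slopes, so they cancel and $w(v_i)=4n+2$ for $i$ odd and $w(v_i)=4n+4$ for $i$ even. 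The only correction to this is at $i=n-1$ when $n\equiv 3\pmod4$, where $f(v_n)=n$ instead of $n+1$ forces $w(v_{n-1})=4n+3$. Finally I would compute directly the two ``wrap-around'' weights $w(v_1)$ and $w(v_n)$, which both involve the anomalous edge $e_n$; a short calculation gives $w(v_1)\in\{4n+2,\,4n+3\}$ and $w(v_n)\in\{3n+2,\,3n+3,\,3n+4\}$ according to $n\bmod 4$.

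Collecting the values, the palette of distinct weights has size $3$ when $n\equiv 0\pmod 4$ (so in fact $\sv(C_n)\le 3$ there) and size $4$ otherwise, hence at most four colours are used. It remains to check properness, which is a finite verification: the interior weights alternate between $4n+2$ and $4n+4$ around the cycle, and each of the at most three exceptional vertices ($v_1$, $v_n$, and also $v_{n-1}$ when $n\equiv 3\pmod4$) has weight different from those of its two neighbours — the inequalities needed are all of the shape $3n+c\ne 4n+c'$ or $4n+3\ne 4n+2,\,4n+4$, all valid for $n\ge 4$.

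The main obstacle is organisational rather than conceptual: one must run the four residue classes of $n$ modulo $4$ in parallel, keep careful track of which vertices near $v_n$ still obey the block pattern and which are perturbed by the correction $f(v_n)=n$ (which affects $w(v_{n-1})$ as well as the bijectivity argument), and confirm that the handful of wrap-around weights neither coincide with a neighbour's weight nor enlarge the palette past four, with $n=3$ (and a sanity check at $n=4$) treated separately. It may streamline the exposition to first record the clean statement $\sv(C_n)\le 3$ for $n\equiv 0\pmod 4$ and then handle the remaining residues together.
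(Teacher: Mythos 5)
Your proposal is correct and takes essentially the same approach as the paper: the identical block-pattern vertex labeling (with the correction $f(v_n)=n$ when $n\equiv 3 \pmod 4$) paired with decreasing edge labels, the only difference being that you keep the single edge labeling $f(e_i)=2n+1-i$ in all cases, whereas the paper switches to $f(e_i)=2n-i$, $f(e_n)=2n$ when $n\not\equiv 3\pmod 4$. Your computed weights check out (including the exceptional value $w(v_{n-1})=4n+3$ in the $n\equiv 3$ case and the separate treatment of $n=3$), so there is no gap.
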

\begin{proof}
Let $C_n$ be cycle on $n \ge 3$ with vertex set $\{v_1,v_2, \dots, v_n\}$ and $e_i = v_iv_{i+1}$ be edges, where subscripts are taken modulo $n$.\\
\\
Now we  have the following two cases:\\
\\
\textbf{Case 1:} When $n \equiv 3(\bmod~4)$, then we define super vertex total labeling $f$ by
\begin{align*}
    f(v_i) &= 
\begin{cases}
                i-1  \quad & \text{if } i \equiv 0(\bmod~4) \\
                i  \quad & \text{if } i \equiv 1 \mbox{ or }2(\bmod~4)\\
                i+1  \quad & \text{if } i \equiv 3(\bmod~4), i \ne n\\
                n , \quad & \text{if } i=n
                \end{cases}
\end{align*}
and
\begin{align*}
    f(e_i) =\begin{cases}
            2n-i+1 \quad & \text {if } i \ne n\\
            n+1 \quad & \text{if } i = n.
            \end{cases}
\end{align*}
Therefore,
\begin{align*}
    w(v_1) &=f(v_2) + f(v_n) + f(e_1) + f(e_n)\\
           &=2 + n + 2n + (n+1)\\
           &=4n+3\\
    w(v_n) &= f(v_1) + f(v_{n-1}) + f(e_{n-1}) + f(e_n)\\
           &= 1 + (n-1) + (n+2) + (n+1)\\
           &= 3n+3.
\end{align*}
Now for each $i, ~2 \le i \le n-1$,
\begin{align*}
    w(v_i)  &= f(v_{i-1}) + f(v_{i+1}) + f(e_{i-1}) + f(e_{i})\\
            &= (4n-2i+3) + f(v_{i-1}) + f(v_{i+1})\\
            &= \begin{cases}
            (4n-2i+3) + i + (i+1) & \text{if } i \equiv 0(\bmod~4)\\
            (4n-2i+3) + (i-2) + (i+1) & \text{if } i \equiv 1(\bmod~4)\\
            (4n-2i+3) + (i-1) + (i+2) & \text{if } i \equiv 2(\bmod~4)\\
            (4n-2i+3) + (i-1) + i &  \text{if } i \equiv 3(\bmod~4)
            \end{cases}\\
            &= \begin{cases}
            4n+4 & \text{if $i$ is even}\\
            4n+2 & \text{if $i$ is odd}.
            \end{cases}
\end{align*}
It is easy to see that  $f$ is svtla labeling of $C_n$ and it induces $4$ proper vertex coloring of $C_n$.\\
\\
\textbf{Case 2:} When $n \not \equiv 3(\bmod~4)$, then we define super vertex total labeling $f$ by
\begin{align*}
    f(v_i) =\begin{cases}
            i-1  \quad &\text{ if } i \equiv 0(\bmod~4) \\
            i  \quad &\text{ if } i \equiv 1 \text{ or } 2(\bmod~4)\\
            i+1  \quad &\text{if } i \equiv 3(\bmod~4)
            \end{cases}
\end{align*}
and
\begin{align*}
    f(e_i)=\begin{cases}
            2n-i \quad &\text { if } i \ne n\\
            2n \quad &\text{ if } i = n.
            \end{cases} 
\end{align*}
Therefore,
\begin{align*}
w(v_1)  &=f(v_2) + f(v_n) + f(e_1) + f(e_n)\\
        &=2 + f(v_n) + (2n-1) + 2n\\
        &=4n + 1+ f(v_n) \\
        &=\begin{cases}
        4n + 1 +  (n-1) \quad &\text{if } n \equiv 0(\bmod~4)\\
        4n + 1+ n \quad &\text{if } n \equiv 1 \text{ or } 2(\bmod~4)\\
        \end{cases}\\
        &=\begin{cases}
        5n \quad &\text{if } n \equiv 0(\bmod~4)\\
        5n +1 \quad &\text{if } n \equiv 1 \text{ or } 2(\bmod~4)
        \end{cases}
\end{align*}
and 
\begin{align*}
  w(v_n)&= f(v_1) + f(v_{n-1}) + f(e_{n-1}) + f(e_n)\\
        &= 1 + f(v_{n-1}) + (n+1) + 2n\\
        &= (3n + 2) + f(v_{n-1})\\
        &= \begin{cases}
        (3n + 2) + n \quad &\text{if } n \equiv 0(\bmod~4)\\
        (3n + 2) + (n-2) \quad &\text{if } n \equiv 1(\bmod~4)\\
        (3n + 2) + (n-1) \quad &\text{if } n \equiv 2(\bmod~4)
        \end{cases}\\
        &= \begin{cases}
        4n + 2 \quad &\text{if } n \equiv 0(\bmod~4)\\
        4n \quad &\text{if } n \equiv 1(\bmod~4)\\
        4n + 1 \quad &\text{if } n \equiv 2(\bmod4~).
        \end{cases}\\
\end{align*}
Now for each $i, \;2 \le i \le n-1$,
\begin{align*}
    w(v_i) &= f(v_{i-1}) + f(v_{i+1}) + f(e_{i-1}) + f(e_{i})\\
           &= f(v_{i-1}) + f(v_{i+1}) + (2n-i+1) + (2n-i)\\
           &= (4n-2i+1) + f(v_{i-1}) + f(v_{i+1})\\
           &= \begin{cases}
            (4n-2i+1) + i + (i+1) & \text{if } i \equiv 0(\bmod~4)\\
            (4n-2i+1) + (i-2) + (i+1) & \text{if } i \equiv 1(\bmod~4)\\
            (4n-2i+1) + (i-1) + (i+2) & \text{if } i \equiv 2(\bmod~4)\\
            (4n-2i+1) + (i-1) + i &  \text{if } i \equiv 3(\bmod~4)
            \end{cases}\\
           &= \begin{cases}
            4n+2 & \text{if $i$ is even}\\
            4n & \text{if $i$ is odd}.
            \end{cases}
\end{align*} 
\\
This proves that $f$ is svtla labeling. Now, we have the following subcases:\\
\\
\textbf{Subcase (i):} When $n \equiv 0(\bmod~4)$, $w(v_1) = 5n, w(v_n) = 4n+2$ and for each $i,\;2 \le i \le n-1, w(v_i) = 4n \text{ or } 4n+2$. Hence, $\sv(C_n) \le 3$.\\
\\
\textbf{Subcase (ii):} When $n \equiv 1(\bmod~4)$, $w(v_1) = 5n + 1, w(v_n) = 4n$ and for each $i,\;2 \le i \le n-1, w(v_i) = 4n \text{ or } 4n+2$. Hence, $\sv(C_n) \le 3$. Also, $3 = \chi(C_n) \le \sv(C_n) \le 3$. This proves $\sv(C_n) = 3$.\\
\\
\textbf{Subcase (iii):} When $n \equiv 2(\bmod~4)$, $w(v_1) = 5n+1, w(v_n) = 4n+1$ and for each $i,\;2 \le i \le n-1, w(v_i) = 4n \text{ or } 4n+2$. Hence, $\sv(C_n) \le 4$.\\
This completes the proof. 
\end{proof}
\begin{figure}[ht]
    \centering
    \includegraphics[scale = 0.5]{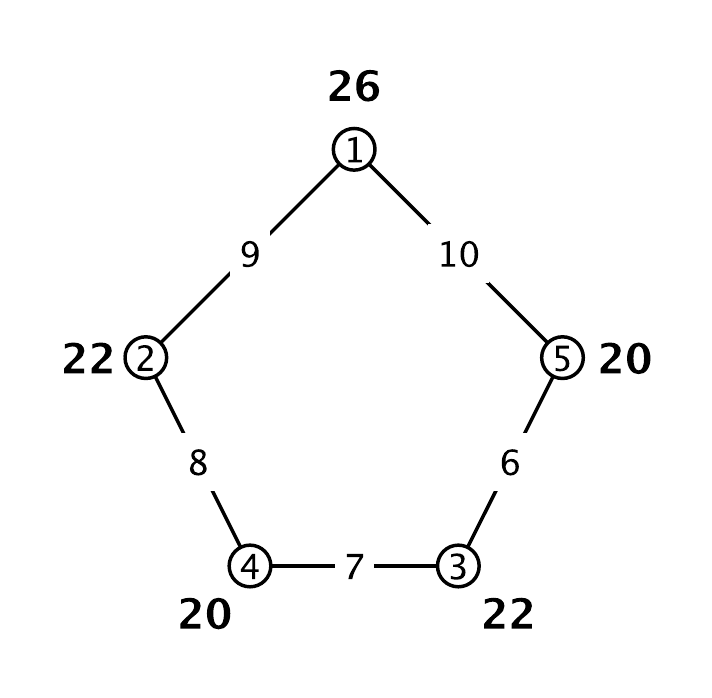}
    \caption{svtla labeling of $C_5$.}
    \label{fig:sv-c5}
\end{figure}
As observed in the Theorem \ref{th:sv-path}, for $n \equiv 0(\bmod~4)$, $\sv(P_n) = \chi(P_n) + 1$ and also in the Theorem \ref{th:sv-cycle}, for $n \equiv 1(\bmod~4)$, $\sv(C_n) = \chi(C_n)$. Then the following question arises.\\
\begin{problem}
Characterise graphs $G$ for which $\sv(G) = \chi(G)$.
\end{problem}
\begin{theorem}
For $n \ge 2$,  $ \sv(K_n) = n$.
\end{theorem}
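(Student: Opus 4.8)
The plan is to reduce the whole statement to the mere \emph{existence} of a super vertex total local antimagic labeling of $K_n$, and then to build one from a classical antimagic edge labeling together with a sorting trick.

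First, the lower bound is free: $\sv(K_n) \ge \chi(K_n) = n$ by the general inequality $\chi(G) \le \sv(G)$ recorded in the introduction. The key observation for the upper bound is that in $K_n$ every two vertices are adjacent, so any proper colouring assigns the $n$ vertices pairwise distinct colours; consequently \emph{every} svtla labeling of $K_n$ induces a colouring with exactly $n$ colours. Hence it suffices to exhibit a single svtla labeling of $K_n$, and then $\sv(K_n) = n$ follows immediately.

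Next I would set up the relevant sum. If $f$ is svtla, then $f(V) = \{1,\dots,n\}$ forces $f(E) = \{\,n+1,\dots,n+\binom{n}{2}\,\}$. Writing $S(v) = \sum_{e \ni v} f(e)$ and using that $N_{K_n}(v) = V\setminus\{v\}$, one gets $w(v) = \sum_{u \ne v} f(u) + S(v) = \tfrac{n(n+1)}{2} - f(v) + S(v)$. So the task becomes: choose the edge labels, then the vertex labels, so that the $n$ numbers $S(v) - f(v)$ are pairwise distinct. For $n \ge 3$, Hartsfield and Ringel \cite{pearls_in_graph_theory} proved $K_n$ is antimagic, i.e.\ there is a bijection $h : E(K_n) \to \{1,\dots,\binom{n}{2}\}$ whose induced vertex sums are pairwise distinct. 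Put $f(e) = h(e) + n$; since every vertex has degree $n-1$, the incident sums become $S(v) = \big(\sum_{e\ni v} h(e)\big) + n(n-1)$, still pairwise distinct. Now order the vertices as $v_1,\dots,v_n$ with $S(v_1) < S(v_2) < \cdots < S(v_n)$ and set $f(v_i) = n+1-i$ (smallest vertex label on the vertex of largest incident edge sum). Then $w(v_i) = \tfrac{n(n+1)}{2} - (n+1) + i + S(v_i)$ is strictly increasing in $i$, so all weights are distinct and $f$ is svtla. For $n=2$ one checks directly that $f(v_1)=1$, $f(v_2)=2$, $f(e)=3$ gives weights $5$ and $4$. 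Together with the lower bound this gives $\sv(K_n)=n$ for all $n\ge 2$.

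The only genuinely nontrivial input is the existence of an antimagic edge labeling of $K_n$; if one prefers a self-contained argument instead of citing \cite{pearls_in_graph_theory}, the main work is to write down an explicit such labeling (for instance, one can take the lexicographic ordering of the edges of $K_n$ and verify that the resulting vertex sums are pairwise distinct) — a short but slightly fiddly computation. The rest — the reduction to existence via completeness, and the reverse-sorted assignment of the forced vertex labels $\{1,\dots,n\}$ to eliminate the leftover coincidences — is routine.
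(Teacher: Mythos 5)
Your proof is correct, but it follows a genuinely different route from the paper. You first observe that in $K_n$ every svtla labeling automatically induces exactly $n$ colours (all vertices are mutually adjacent, so all weights are distinct) and that $\sv(K_n)\ge\chi(K_n)=n$, so the theorem reduces to the bare existence of one svtla labeling; you then manufacture such a labeling by taking an antimagic edge labeling $h$ of $K_n$ from \cite{pearls_in_graph_theory}, shifting it to $f(e)=h(e)+n$ (which preserves distinctness of the incident-edge sums $S(v)$ because $K_n$ is regular), and exploiting the identity $w(v)=\tfrac{n(n+1)}{2}-f(v)+S(v)$ to assign the vertex labels in the order opposite to the $S$-values, so that both $-f(v_i)$ and $S(v_i)$ increase together and the weights are strictly increasing. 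The paper instead argues by induction on $n$: starting from the base cases $K_2$ and $K_3$ (obtained from its path and cycle theorems), it extends a given svtla labeling of $K_n$ to $K_{n+1}=K_n+K_1$ by shifting all old labels up by one, giving the new vertex the label $1$ and the new edges the largest labels, and then checking that the old weight order is preserved and the new vertex's weight dominates. Your argument buys brevity and a clean closed-form construction for each $n$, at the cost of importing the nontrivial external fact that $K_n$ is antimagic for $n\ge3$ (your sketched fallback construction is not carried out, so the citation is doing real work); the paper's induction is longer and less explicit but self-contained, relying only on its own earlier results. Your reduction step (that for complete graphs existence alone pins down $\sv$) is a small but genuine simplification that the paper does not articulate.
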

\begin{proof}
We will prove the result by induction on $n$. By Theorem \ref{th:sv-path}, we know that $\sv(K_2) = 2$ and by Theorem \ref{th:sv-cycle}, $\sv(K_2 + K_1) = \sv(K_3) \equiv \sv(C_3) = 3$. Hence, the result is true for $n = 2$.
Assume that for a given $n \ge 3, \sv(K_n) = n$ with svtla labeling $f$.
Without loss of generality we may assume that $v_1, v_2, \dots, v_n$ are the vertices of \(K_n\) such that $w_f(v_1) < w_f(v_2)< \dots < w_f(v_n)$. Let $e_1, e_2, \dots, e_m$ be the edges of $K_n$, where $m = \frac{n(n-1)}{2}$. Let \(V(K_1) = v_0\). We define super vertex total labeling $g$ of \(K_n+K_1\) by 
\[
g(x) = 
\begin{cases}
1 & \text{ if } x = v_0\\
f(v_i)+1 & \text{ if } x = v_i, 1 \le i \le n\\
f(e_i)+1 & \text{ if } x = e_i, 1 \le i \le n\\
m+n+1+i & \text{ if } x = v_0v_i, 1 \le i \le n.
\end{cases}
\]
Therefore,
\begin{align*}
w_g(v_0)
&= \sum_{v \in V(G)} f(v) + \sum_{i=1}^{n} f(v_0v_i)\\
&= (2+3+ \dots + (n+1)) + \sum_{i=1}^{n} (m+n+1+i)\\
&= \frac{n^3+4n^2+7n}{2}\\
\end{align*}
and for \(1 \le i \le n\),
\begin{align*}
w_g(v_i)
&= \sum_{y \in NT_{K_n}(v_i)}f(y) + m+n+1+i\\
&=w_f(v_i) + 2(n-1) + (m+n+1+i).
\end{align*}
From the expression of $w_g(v_i)$, it is clear that $w_g(v_1) < w_g(v_2) < \dots < w_g(v_n)$. Now we show that $w_g(v_0) > w(v_i)$ for each $i, \;  1 \le i 
\le n$. Consider any vertex $v_i$ . If we assign largest labels from the set $\{1, 2, \dots, \frac{n(n+1)}{2}\}$ to the elements in $NT(v_i)$ then we obtain
\[
w_{g}(v_i) \le \left(\frac{n^2+n}{2} - (2n-3) \right) + \left(\frac{n^2+n}{2} - (2n-2)\right) + \dots + \frac{n^2+n}{2} = n^3 - 2n^2 + 4n - 3.
\]
Now for $n \ge 3$, $w_g(v_0) = \frac{n^3+4n^2+7n}{2} > n^3 - 2n^2 + 4n - 3 \ge w_g(v_i)$ where $1 \le i \le n$. Hence, all the vertex weights in $K_n+K_1$ are distinct. This proves the theorem.
\end{proof}
 Now we calculate the super vertex total local antimagic chromatic number of a complete bipartite graph $K_{m,n}$.
\begin{theorem} \label{th:sv-bipartite}
$\sv(K_{m,n}) = 2$ if and only if $m,n>1, mn >4$ and $m \equiv n (\bmod~2)$.
\end{theorem}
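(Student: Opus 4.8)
The plan is to reduce the statement to the existence of a magic rectangle $MR(m,n)$ and then invoke Harmuth's Theorem (Theorem~\ref{mr}). Write the parts of $K_{m,n}$ as $A,B$ with $|A|=m$, $|B|=n$. Since $K_{m,n}$ has an edge, $\chi(K_{m,n})=2$, and we know $\chi(G)\le\sv(G)$, so $\sv(K_{m,n})=2$ holds exactly when $K_{m,n}$ admits a svtla labeling whose induced colouring uses only two colours. The first step is to note that, $K_{m,n}$ being connected and bipartite, its only proper $2$-colouring is the bipartition $(A,B)$; hence a svtla labeling $f$ induces a $2$-colouring if and only if the weight function $w$ is constant on $A$ and constant on $B$ — and when this happens the two constants are automatically distinct, because $f$ is svtla and every vertex of $A$ is joined to every vertex of $B$.

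Next I would translate this constancy requirement into a magic rectangle. Since $f$ is svtla, $f(V)=\{1,\dots,m+n\}$, so $f(E)=\{m+n+1,\dots,m+n+mn\}$. For $v\in A$ we have $w(v)=\sum_{b\in B}f(b)+\sum_{b\in B}f(vb)=S_B+R(v)$, where $S_B=\sum_{b\in B}f(b)$ is fixed and $R(v)$ is the sum of the edge labels at $v$; symmetrically $w(u)=S_A+C(u)$ for $u\in B$. Viewing the edge labels as an $m\times n$ array indexed by $A\times B$, ``$w$ constant on $A$ and on $B$'' says exactly that this array has constant row sums and constant column sums. Subtracting the constant $m+n$ from every entry turns $\{m+n+1,\dots,m+n+mn\}$ into $\{1,\dots,mn\}$ while lowering every row sum by $n(m+n)$ and every column sum by $m(m+n)$, hence preserving constancy of the row sums and of the column sums; so such an edge labeling exists if and only if $MR(m,n)$ does. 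By Theorem~\ref{mr} this happens if and only if $m,n>1$, $mn>4$ and $m\equiv n\pmod 2$. This settles the ``only if'' direction and reduces ``if'' to completing a magic rectangle to a full svtla labeling.

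For the ``if'' direction I would start from an $MR(m,n)$ (which exists under the stated hypotheses), add $m+n$ to every entry, and label the edges of $K_{m,n}$ by the resulting array; the vertices receive the labels $\{1,\dots,m+n\}$ bijectively, with $S_A:=\sum_{a\in A}f(a)$. Then every vertex of $A$ has weight $S_B+r$ and every vertex of $B$ has weight $S_A+c$, where $r,c$ are the (now fixed) shifted row and column sums, so $w_A-w_B=(T-2S_A)+(r-c)$ with $T=\frac{(m+n)(m+n+1)}{2}$ fixed — a strictly decreasing function of $S_A$. Since $S_A$ is not forced (for instance, interchanging which of the labels $m$ and $m+1$ is assigned to $A$ changes $S_A$ by $1$, using $m,n\ge 2$), at least one admissible assignment yields $w_A\ne w_B$; with that choice $f$ is svtla and induces the $2$-colouring $(A,B)$, so $\sv(K_{m,n})=2$. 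I expect this last step — guaranteeing $w_A\ne w_B$ once the row and column sums have been pinned down by the magic rectangle — to be the only real obstacle, and the freedom in how the vertex labels $\{1,\dots,m+n\}$ are split between $A$ and $B$ is precisely what makes it go through.
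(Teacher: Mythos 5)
Your proposal is correct and takes essentially the same route as the paper: both directions identify the weight condition with an $m\times n$ edge-label array having constant row sums and constant column sums, pass between entry sets $\{m+n+1,\dots,m+n+mn\}$ and $\{1,\dots,mn\}$ by shifting every entry by $m+n$, and then invoke Theorem~\ref{mr}. The only divergence is how $w_A\neq w_B$ is secured in the constructive direction: the paper fixes the assignment $f(x_i)=i$, $f(y_j)=m+j$ and computes both weights explicitly (they are indeed distinct, although the inequality $w(x_i)<w(y_j)$ asserted there actually goes the other way), whereas you avoid the computation by observing that exchanging which of the labels $m$ and $m+1$ lies in $A$ shifts $w_A-w_B$ by $2$, so some admissible split of the vertex labels works.
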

\begin{proof}
Suppose $m,n>1, mn >4$ and $m \equiv n (\bmod~2)$. Then by Theorem~\ref{mr}, there exists a magic rectangle $MR(m,n)$ with row sum $\frac{n(mn+1)}{2}$, column sum $\frac{m(mn+1)}{2}$. Let $\{x_1, x_2, \dots, x_m \}$ and $\{y_1, y_2, \dots, y_n \}$ be bipartition of the vertex set of $K_{m,n}$ with $m \le n$. Adding $(m+n)$ to the each entry of $MR(m,n)$ we obtain a new rectangle say $R = [r_{i,j}]_{m\times n}$ in which row sum is $\rho = n(m+n)+\frac{n(mn+1)}{2}$ and column sum is $\sigma = m(m+n) + \frac{m(mn+1)}{2}$. Define the super vertex total labeling $f$ by
\begin{align*}
f(a) = \begin{cases}
        i &\mbox{ if } a = x_i\\
        m+i &\mbox{ if } a = y_i\\
        r_{i,j} &\mbox{ if } a = x_iy_j
        \end{cases}
\end{align*}
Then for any $i,~1 \le i \le m$ and for any $j,~1 \le j \le n$,
\begin{align*}
w(x_i)
&= \sum_{j=1}^{n} f(y_j) + \rho \\
&= \left(mn + \frac{n(n+1)}{2}\right) + \left(mn + n^2 + \frac{mn^2+n}{2}\right)\\
&= \frac{(2+m)n^2 + (2+4m)n}{2}\\
w(y_j)
&= \sum_{i=1}^{m} f(x_i) + \sigma \\
&= \frac{m(m+1)}{2} + \left(m^2 + mn + \frac{m^2n+m}{2} \right)\\
&= \frac{(n+3)m^2 + (2n+2)m}{2}
\end{align*}
It is easy to observe that weights of vertices in independent sets are the same and $w(x_i) < w(y_j)$ . Therefore $f$ is svtla labeling, and it induces $2$ colors. Hence, $\sv(K_{m,n}) \le 2$. We know, $2 = \chi(K_{m,n}) \le \sv(K_{m,n})$. Therefore, $\sv(K_{m,n}) = 2$.\\

Conversely, suppose that $\sv(K_{m,n}) = 2$ with svtla labeling $f$. We must have $w(x_i) \ne w(y_j)$ for any $i,\;1 \le i \le m$ and for any $j,\;1 \le j \le n$ and weights of vertices in the independent sets must be same. We can form a magic rectangle $MR(m,n)$ with $(i,j)$th entry as $f(x_iy_j) - (m+n)$ so that row sum is equal to $w(y_j)-n(m+n)$ (where $w(y_j)$ is same for each $j, 1\le j \le n$) and the column sum is equal to $w(x_i)-m(m+n)$ (where $w(x_i)$ is same for each $i, 1\leq i \leq m$). Hence, by Theorem  \ref{mr}, $m$ and $n$ satisfy the required conditions. This completes the proof.
\end{proof}
\begin{proposition}
For any $n \ge 1$,
\begin{align*}
\sv(K_{2,n}) = 
\begin{cases}
2 \mbox{ if }  n \ge 4 \mbox{ and n is even}\\
3 \mbox{ if } n \mbox{ is odd or } n=2.
\end{cases}
\end{align*}
\end{proposition}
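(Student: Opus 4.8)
The plan is to separate the case $n\ge 4$ with $n$ even, which follows directly from Theorem~\ref{th:sv-bipartite}, from the remaining values of $n$ (namely $n$ odd, or $n=2$), which I would handle by combining the bound $\chi\le\sv$ with one explicit labeling.

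\textbf{The even case and the lower bound.} If $n\ge 4$ is even then $m=2$ and $n$ satisfy $m,n>1$, $mn=2n>4$, and $m\equiv n\pmod 2$, so Theorem~\ref{th:sv-bipartite} gives $\sv(K_{2,n})=2$ and the first line of the statement is done. In all the remaining cases the hypotheses of Theorem~\ref{th:sv-bipartite} fail: for $n$ odd because $2\not\equiv n\pmod 2$, for $n=2$ because $mn=4\not>4$, and for $n=1$ because $n\not>1$. Hence $\sv(K_{2,n})\ne 2$, while $K_{2,n}$ is bipartite with nonempty parts so $\chi(K_{2,n})=2\le\sv(K_{2,n})$; therefore $\sv(K_{2,n})\ge 3$ whenever $n$ is odd or $n=2$.

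\textbf{The matching upper bound.} For the other inequality I would write the parts of $K_{2,n}$ as $\{x_1,x_2\}$ and $\{y_1,\dots,y_n\}$ and define $f$ by $f(x_1)=1$, $f(x_2)=2$, $f(y_j)=j+2$, $f(x_1y_j)=n+2+j$, and $f(x_2y_j)=3n+3-j$ for $1\le j\le n$. This is a bijection $V\cup E\to\{1,\dots,3n+2\}$ with $f(V)=\{1,\dots,n+2\}$, hence a super vertex total labeling. The point of the edge assignment is that the block $\{n+3,\dots,3n+2\}$ is split into the $n$ equal-sum pairs $\{n+2+j,\,3n+3-j\}$, so $f(x_1y_j)+f(x_2y_j)=4n+5$ is independent of $j$ and every $y_j$ has the same weight $w(y_j)=f(x_1)+f(x_2)+4n+5=4n+8$. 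A short computation gives $w(x_1)=\sum_j f(y_j)+\sum_j f(x_1y_j)=2n^2+5n$ and $w(x_2)=\sum_j f(y_j)+\sum_j f(x_2y_j)=3n^2+5n$, so $w(x_2)-w(x_1)=n^2\ne 0$, $w(x_1)-w(y_j)=2n^2+n-8$, and $w(x_2)-w(y_j)=3n^2+n-8$. Since the quadratics $2n^2+n-8$ and $3n^2+n-8$ have non-square discriminants $65$ and $97$, neither vanishes at an integer; as the only edges of $K_{2,n}$ are the $x_iy_j$, this shows $f$ is an svtla labeling using exactly the three colors $4n+8$, $2n^2+5n$, $3n^2+5n$, so $\sv(K_{2,n})\le 3$. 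Together with the lower bound this gives $\sv(K_{2,n})=3$ for $n$ odd or $n=2$.

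\textbf{Main obstacle.} There is no deep difficulty here: once the edge labels are paired to equalize all the $y_j$-weights, the only thing left to verify is that $x_1$ and $x_2$ never take this common weight, which is exactly the two quadratic non-root facts above. (For $n=2$ one could alternatively use $K_{2,2}\cong C_4$ with Theorem~\ref{th:sv-cycle}, and the case $n=1$ is the star $K_{1,2}$ of Corollary~\ref{th:sv-star}, but the single construction above already covers both.) The only mildly delicate point is checking that this one construction is valid uniformly for all $n\ge 1$ rather than splitting into residue subcases.
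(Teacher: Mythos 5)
Your proposal is correct and follows essentially the same route as the paper: the even case via Theorem~\ref{th:sv-bipartite}, the lower bound $\sv(K_{2,n})\ge 3$ from the ``only if'' direction of that same theorem, and exactly the paper's labeling $f(x_1)=1$, $f(x_2)=2$, $f(y_j)=j+2$, $f(x_1y_j)=n+2+j$, $f(x_2y_j)=3n+3-j$ for the upper bound. The only minor differences are that you apply this single construction uniformly down to $n=1,2$ instead of quoting the star and cycle results, and your distinctness check via the quadratics $2n^2+n-8$ and $3n^2+n-8$ is actually more careful than the paper's asserted ordering $w(x)<w(u_i)<w(y)$, which is inaccurate for $n\ge 2$ even though the needed distinctness still holds.
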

\begin{proof}
We know that $\sv(K_{2,1}) = \sv(K_{2,2}) = 3$. Let $n \ge 3$. If $n$ is even by Theorem~\ref{th:sv-bipartite}, $\sv(K_{2,n}) = 2$. Let $n$ be odd. Again by Theorem~\ref{th:sv-bipartite}, $\sv(K_{2,n}) \ge 3$. We will show that $\sv(K_{2,n}) \le 3$. Let $\{x,y\}$ and $\{u_1, u_2, \dots, u_n\}$ be bipartition of $K_{2,n}$. By Corollary~\ref{th:sv-star}, $\sv(K_{2,1}) = \sv(P_3) = 3$ and by Theorem~\ref{th:sv-cycle}, $\sv(K_{2,2}) = \sv(C_4) \le 3$. For $n \ge 3$, define  super vertex total labeling of $K_{2,n}$ by
$f(x) = 1, f(y) = 2$, for each $i,~1 \le i \le n$, $f(u_i) = 2+i$, and 
\begin{align*}
    f(au_i) = \begin{cases}
            n+2+i & \text{ if } a = xu_i\\
            3n+3-i & \text{ if } a = yu_i
            \end{cases}
\end{align*}
Now we calculate the vertex weights:
For each $i, \; 1 \le i \le n$, $w(u_i) = 4n+8$, $w(x) = 2n^2 + 5n, w(y) = 3n^2 + 5n$. Observe that, $w(x) < w(u_i) < w(y)$. Hence, $f$ is svtla labeling and induces $3$ colors. Therefore, $\sv(K_{2,n}) \le 3$. Hence, $\sv(K_{2,n}) = 3$.  
\end{proof}
Let $S_{n,t}$ be a graph obtained by replacing each edge of a star $S_n$ by a path of length $t+1$ (see Figure \ref{fig:sv-sub_star}). We calculate the svtla chromatic number of $S_{n,t}$ for $t=1,2$. Let $vv_i$ be replaced by path $vv_{i,1},v_{i,2}, \dots ,v_{i,t+2}$.
\begin{theorem}
$\sv(S_{n,t}) \le n + t+1$, for $t = 1,2$.
\end{theorem}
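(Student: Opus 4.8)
The plan is to settle the two cases $t=1$ and $t=2$ by explicit construction; since no lower bound is claimed, it suffices to produce, for each, a super vertex total local antimagic labeling of $S_{n,t}$ whose induced colouring uses at most $n+t+1$ colours. Fix notation: $v$ is the centre, leg $i$ (for $1\le i\le n$) is the path $v\,v_{i,1}\,v_{i,2}\cdots v_{i,t+1}$ with pendant vertex $v_{i,t+1}$ and edges $e_{i,j}=v_{i,j-1}v_{i,j}$, where $v_{i,0}=v$; thus $|V|=1+n(t+1)$ and $|E|=n(t+1)$, each $v_{i,j}$ with $1\le j\le t$ has degree $2$, and the weight of such an internal vertex is $w(v_{i,j})=f(v_{i,j-1})+f(v_{i,j+1})+f(e_{i,j})+f(e_{i,j+1})$, while $w(v_{i,t+1})=f(v_{i,t})+f(e_{i,t+1})$.

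The construction I would use puts $f(v)=1$, partitions the remaining vertex labels $\{2,\dots,|V|\}$ into $t+1$ size-$n$ \emph{vertex blocks} indexed by the position $j=1,\dots,t+1$ along a leg, and partitions the edge labels $\{|V|+1,\dots,|V|+|E|\}$ into $t+1$ size-$n$ \emph{edge blocks}; within each block the $n$ labels are handed to the $n$ legs by an affine rule $i\mapsto a+bi$ with slope $b\in\{1,-1,-2\}$. The blocks and slopes are to be chosen so that (i) the $t+1$ vertex blocks together are exactly $\{2,\dots,|V|\}$ and the $t+1$ edge blocks together are exactly $\{|V|+1,\dots,|V|+|E|\}$, so that $f$ is indeed a super vertex total labeling, and (ii) in each internal weight $w(v_{i,j})$, $1\le j\le t$, the coefficients of $i$ sum to $0$, so that $w(v_{i,j})$ is independent of $i$. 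Requirement (ii) is what forces some blocks to consist of every second integer in an interval rather than of consecutive integers — that is precisely how a slope $-2$ becomes available to cancel two slopes $+1$. Given such a choice, each layer $L_j=\{v_{i,j}:1\le i\le n\}$ with $1\le j\le t$ is monochromatic, while the pendant weights $w(v_{i,t+1})=f(v_{i,t})+f(e_{i,t+1})$ are affine and nonconstant in $i$, hence pairwise distinct. The colour classes are therefore the centre $v$, the $t$ layers $L_1,\dots,L_t$, and the $n$ pendants, at most $1+t+n$ classes in all.

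It then remains to verify properness, i.e.\ that $w(x)\ne w(y)$ on every edge $xy$. Because $w(v)$ is one value and each $L_j$ is monochromatic, this reduces to finitely many checks: $w(v)$ differs from the common value on $L_1$; the common values on $L_j$ and $L_{j+1}$ differ for $1\le j\le t-1$; and the common value on $L_t$ differs from each pendant weight. In the explicit labelings $w(v)$ is a quadratic polynomial in $n$ while all the other weights above are linear in $n$ (and in $i$), so an equality could hold only if $n$ were a root of a fixed quadratic; arranging that this quadratic has non-square discriminant — which the natural choice of blocks does, producing for instance $3n^{2}-3n-4=0$ for $t=1$ and $4n^{2}-7n-5=0$ for $t=2$ — rules this out, and the remaining, linear, inequalities are immediate. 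I would also record $w(v)$, the values on the $L_j$, and the pendant weights in closed form so that the bound on the number of colours is transparent.

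The delicate point is step (ii) together with the bijectivity constraint (i): requiring the vertex labels to be exactly $\{1,\dots,|V|\}$ and the edge labels exactly $\{|V|+1,\dots,|V|+|E|\}$ pins the blocks down up to a handful of options, and one must then annihilate the $i$-dependence in all $t$ internal weights simultaneously with the limited stock of slopes $\{1,-1,-2\}$. The working choice is genuinely different for $t=1$ and $t=2$, and it does not obviously extend to larger $t$ (more internal layers to zero out against the same stock of slopes), which is why the statement is restricted to $t=1,2$ and is handled case by case rather than by a uniform argument.
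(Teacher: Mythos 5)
Your plan is only a plan: the theorem's entire content is an explicit super vertex total labeling, and you never exhibit one. You constrain it — blocks of size $n$, slopes in $\{1,-1,-2\}$, condition (i) (the blocks tile $\{2,\dots,|V|\}$ and $\{|V|+1,\dots,|V|+|E|\}$) and condition (ii) (zero total slope in every internal weight) — and then assert that a working choice exists, quoting quadratics such as $3n^2-3n-4$ and $4n^2-7n-5$ that come from an unstated ``natural choice.'' But (i) and (ii) pull against each other in a way you acknowledge and do not resolve: a slope-$\pm2$ block occupies every other integer of a length-$2n$ interval, so it forces a second interleaved block of slope $\pm2$ on the complementary residues, and one must check that the resulting slope multiset can simultaneously kill the $i$-dependence of all $t$ internal weights while keeping the pendant weight non-constant. (For the record, this can be done — e.g.\ for $t=2$ one may take the two non-central edge blocks interleaved with slopes $-2$ and $+2$ and all vertex blocks consecutive — so your approach is salvageable, but without the explicit labeling and the verification of the properness checks, in particular that the constant value on $L_t$ avoids all $n$ pendant weights and that your quadratic for $w(v)$ versus $L_1$ really has no positive integer root, the argument is incomplete where it matters most.)

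It is also worth noting that the paper's construction is the ``dual'' of yours and avoids the delicate part altogether. The paper gives $f(v)=n(t+1)+1$, $f(v_{i,j})=i+n(j-1)$, $f(vv_{i,1})=n(2t+1)+1+i$ and $f(v_{i,j-1}v_{i,j})=n(2t+3-j)-i+2$: every block is a run of consecutive labels with slope $\pm1$. The trick is to let the layer \emph{adjacent to the centre} carry the $n$ distinct weights, $w(v_{i,1})=n(5t+4)+4+i$ (its total neighbourhood contains the fixed centre label plus three $i$-dependent labels, so the slopes need not cancel), while layer $2$ and the pendant layer come out constant, $4nt+3n+4$ and $2nt+n+2$; this yields $n+t+1$ colours with no interleaved blocks, no $\pm2$ slopes, and only routine linear comparisons, and the restriction to $t\le2$ appears there only because for $t\ge3$ consecutive internal layers would share the same weight. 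If you want to keep your pendant-coloured scheme, write out the two labelings explicitly and do the finitely many weight comparisons; as submitted, the existence of the labeling — the whole point — is assumed rather than proved.
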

\begin{proof}
We define the super vertex total labeling of $S_{n,t}$ by
\begin{align*}
&f(v) = nt+n+1,\\
&f(v_{i,j}) = i + n(j-1) \quad &\mbox{ if } 1 \le i \le n, 1 \le j \le t+1,\\
&f(vv_{i,1}) = n(2t+1)+1+i \quad &\mbox{ if } 1 \le i \le n,\\
&f(v_{i,j-1}v_{i,j}) = n(2t+3-j)-i+2 \quad &\mbox{ if } 1 \le i\le n, 2 \le j \le t+1.
\end{align*}
The weights of vertices induced by $f$ are:
\begin{align*}
    w(x) &= \sum_{i=1}^{n}f(v_{i,1}) + \sum_{i=1}^{n}f(v_{i,1}v{i,2})\\
    &= \sum_{i=1}^{n} i + \sum_{i=1}^{n} (2nt+n+1+i)\\
    &= 2n^2(1 + t) + 2n
\end{align*}
and for each $i,~ 1 \le i \le n $,
\begin{align*}
    w(v_{i,1}) &= f(v) + f(v_{i2}) + f(vv_{i,1}) + f(v_{i,2}v_{i,1})\\
    &= (nt+n+1) + (n+i) + (2nt+n+1+i) + (2nt+n+2-i)\\ 
    &= n(5t+4)+4+i,\\
    w(v_{i,2}) &= f(v_{i,1}) + f(v_{i,3}) + f(v_{i,2}v_{i,1}) + f(v_{i,2}v_{i,3})\\
    &= (i) + (i+2n) + (2nt+n+2-i) + (2nt+2-i)\\
    &= 4nt+3n+4,\\
    w(v_{i,3}) &= f(v_{i,2}) + f(v_{i,3}v_{i,2})\\
    &= (n+i) + (2nt+2-i)\\
    &= 2nt+n+2.
\end{align*}
Clearly, $f$ is a svtla labeling of $S_{n,t}$, and $f$ induces $n+t+1$ distinct colors. Hence, $\sv(S_{n,t}) \le n+t+1$.
\end{proof}
\begin{figure}[ht]
    \centering
    \includegraphics[scale = 0.5]{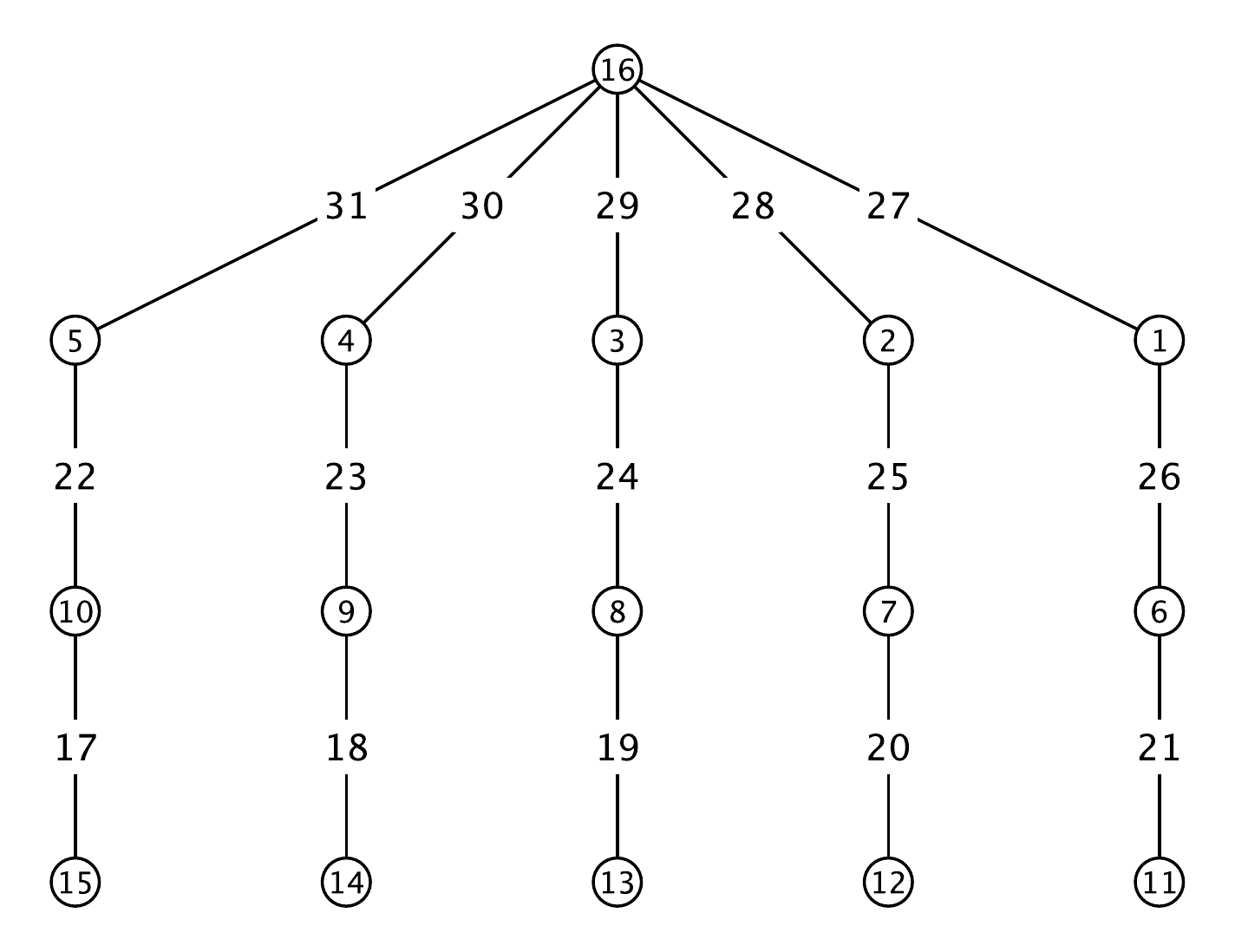}
    \caption{svtla labeling of $S_{5,2}$.}
    \label{fig:sv-sub_star}
\end{figure}
Let $B_{m,n}$ be the bi-star with vertex set $\{x,y, x_i, y_j,: 1\le i \le m, 1\le j \le n \}$ with centers $x$ and $y$ and edge set $\{xy, xx_i, yy_j : 1 \le i \le m, 1 \le j \le n\}$ . We calculate the super vertex total local anitmagic chromatic number $\sv(B_{m,n})$.
\begin{theorem}
For the bi-star $B_{m,n}, \sv(B_{m,n}) = n+2$.
\end{theorem}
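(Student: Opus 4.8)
The plan is to prove matching bounds $\sv(B_{m,n})\ge n+2$ and $\sv(B_{m,n})\le n+2$. I will work under the convention $m\le n$ (so the claim is really $\sv(B_{m,n})=\max\{m,n\}+2$); the degenerate case $m=0$ is just the star $K_{1,n+1}$ and is disposed of by Corollary~\ref{th:sv-star}, so I take $1\le m\le n$. The key structural fact I will use repeatedly is that for any svtla labeling $f$ one has $f(V)=\{1,\dots,m+n+2\}$ and $f(E)=\{m+n+3,\dots,2m+2n+3\}$, so every edge label strictly exceeds every vertex label.

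For the lower bound I would take an arbitrary svtla labeling $f$ and exhibit $n+2$ pairwise distinct vertex weights. The $n$ leaf-weights $w(y_j)=f(y)+f(yy_j)$ are distinct because the $f(yy_j)$ are. Then $w(y)=f(x)+\sum_{j'}f(y_{j'})+f(xy)+\sum_{j'}f(yy_{j'})$ beats each $w(y_j)$, since $w(y)-w(y_j)=f(x)+\sum_{j'}f(y_{j'})+\bigl(f(xy)-f(y)\bigr)+\sum_{j'\ne j}f(yy_{j'})>0$. Finally $w(x)=f(y)+\sum_i f(x_i)+f(xy)+\sum_i f(xx_i)$: because $m\ge 1$, the part $f(xy)+\sum_i f(xx_i)$ is a sum of at least two distinct edge labels and hence already exceeds $f(yy_j)\le 2m+2n+3$, so $w(x)>w(y_j)$ for all $j$; and $w(x)\ne w(y)$ because $xy$ is an edge. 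Thus $w(y_1),\dots,w(y_n),w(y),w(x)$ are $n+2$ distinct colors.

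For the upper bound I would write down one explicit labeling. Take $f(x_i)=i$, $f(y_j)=m+j$, $f(x)=m+n+1$, $f(y)=m+n+2$, so that $f(y)-f(x)=1$; on the edges, put $f(yy_i)=m+n+1+2i$ and $f(xx_i)=m+n+2+2i$ for $1\le i\le m$ (the $2m$ smallest edge labels, paired so $f(xx_i)=f(yy_i)+1$), then $f(yy_{m+j})=3m+n+2+j$ for $1\le j\le n-m$, and $f(xy)=2m+2n+3$. After checking $f$ is a bijection, the payoff is immediate: $w(x_i)=f(x)+f(xx_i)=f(y)+f(yy_i)=w(y_i)$ for $i\le m$, so the leaf colors are exactly the $n$ distinct values $w(y_1),\dots,w(y_n)$; the same size estimates as above give $w(y)>w(y_j)$ and $w(x)>w(y_j)\;(=w(x_i))$ for all relevant indices; and the colors are then precisely $\{w(y_1),\dots,w(y_n)\}\cup\{w(y)\}\cup\{w(x)\}$, i.e.\ $n+2$ of them, provided $w(x)\ne w(y)$. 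Checking that last inequality (the only edge-condition not already covered) requires plugging the labels into the formulas for $w(x)$ and $w(y)$ and comparing.

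The main obstacle is exactly that final comparison $w(x)\ne w(y)$ in the construction: it is not forced by a crude count of terms, so I expect to need the explicit closed forms of both weights and a short parity-or-magnitude argument (small cases suggest $w(x)$ and $w(y)$ are never equal, e.g.\ $w(x)=18\ne 17=w(y)$ already for $B_{1,1}=P_4$). The remaining work — verifying the edge-label assignment is a bijection for every $1\le m\le n$, and the $m=0$ reduction to a star — is routine bookkeeping.
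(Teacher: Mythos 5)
Your proposal is essentially the paper's own argument: the same lower bound (the $n$ distinct leaf weights at $y$, plus $w(y)$, plus $w(x)$, using that a sum of two distinct edge labels, each exceeding $m+n+2$, beats the single label $f(yy_j)\le 2m+2n+3$), and the same upper-bound device of an explicit super vertex total labeling that forces $w(x_i)=w(y_i)$ for $1\le i\le m$ so that only $n+2$ colors appear — the paper merely uses $f(xx_i)=m+n+2+i$, $f(yy_i)=2m+n+2+i$ rather than your interleaved edge labels. The one step you deferred, $w(x)\ne w(y)$, does go through for your labeling: setting $d=n-m\ge 0$ one computes $w(y)-w(x)=m^2-m-1+6md+2d^2+3d$, which equals $-1$ when $m=n=1$ and is positive in all other cases, hence never zero.
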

\begin{proof}
	Let $m \le n$. By the Theorem \ref{th:sv-leaves}, $n+1 \le \se(B_{m,n})$. First we show that $\sv(B_{m,n}) > n+1$. Let $f$ be any svtla labeling of $B_{m,n}$. We know that weights of all pendent vertices $y_i$ are distinct and $w(y) \ne w(y_i)$ for any $i, ~1\le i \le n$. If $w(x) = w(y_i)$ then we obtain $f(x) + f(xx_i) = f(x) + f(xy) + \sum_{i=1}^{n}(f(y_i) + f(yy_i))$. This implies, $f(xx_i) = f(xy) + \sum_{i=1}^{n}(f(y_i) + f(yy_i))$ which is not possible. Therefore, $\sv(B_{m,n}) \ge n+2$. For the upper bound, we define super vertex total labeling $f$ of $B_{m,n}$ as:
	\begin{align*}
		f(x) &= m+n+2\\
		f(y) &= n+2\\
		f(xy) &= 2m+2n+3\\
		f(x_i) &= i&1 \le i \le m\\
		f(xx_i) &= m+n+2+i &1 \le i \le m\\
		f(yy_i) &=  2m+n+2+i&1 \le i \le n
	\end{align*}
	and we label the vertices $y_i$ by $\{m+1, m+2, \dots, m+n, m+n+1\} - \{n+2\}$ in any manner. The sum of these $y_i$ labels is $\frac{n^2+2mn+2m+n-2}{2}$. Now we calculate the weights:
	\begin{align*}
		w(x) &= f(y) + f(xy) +  \sum_{i=1}^{m} (f(x_i) + f(xx_i))\\
				&= (n+2) + (2m+2n+3) + \sum_{i=1}^{m} (m+n+2+2i)\\
				&= 2m^2+mn+5m+3n+5,\\
		w(y) &= f(x) + f(xy) +  \sum_{i=1}^{n} f(y_i) + \sum_{i=1}^{n}f(yy_i)\\
				&= (m+n+2) + (2m+2n+3) + \frac{n^2+2mn+2m+n-2}{2} + \sum_{i=1}^{n}(2m+n+2+i)\\
				&= 2n^2+3mn+4m+6n+4.
	\end{align*}
	The weights of pendent vertices $w(x_i) = f(x) + f(xx_i) = m+n+4+i$ for each $i, ~1 \le i \le m$ and $w(y_j) = f(y) + f(yy_j) = 2m+2n+4+j$ for each $j, \; 1 \le j \le n$. Thus $f$ is a required svtla labeling which induces $n+2$ colors. Hence, $\sv(B_{m,n}) = n+2$.
\end{proof}
\begin{figure}[ht]
    \centering
    \includegraphics[scale = 0.5]{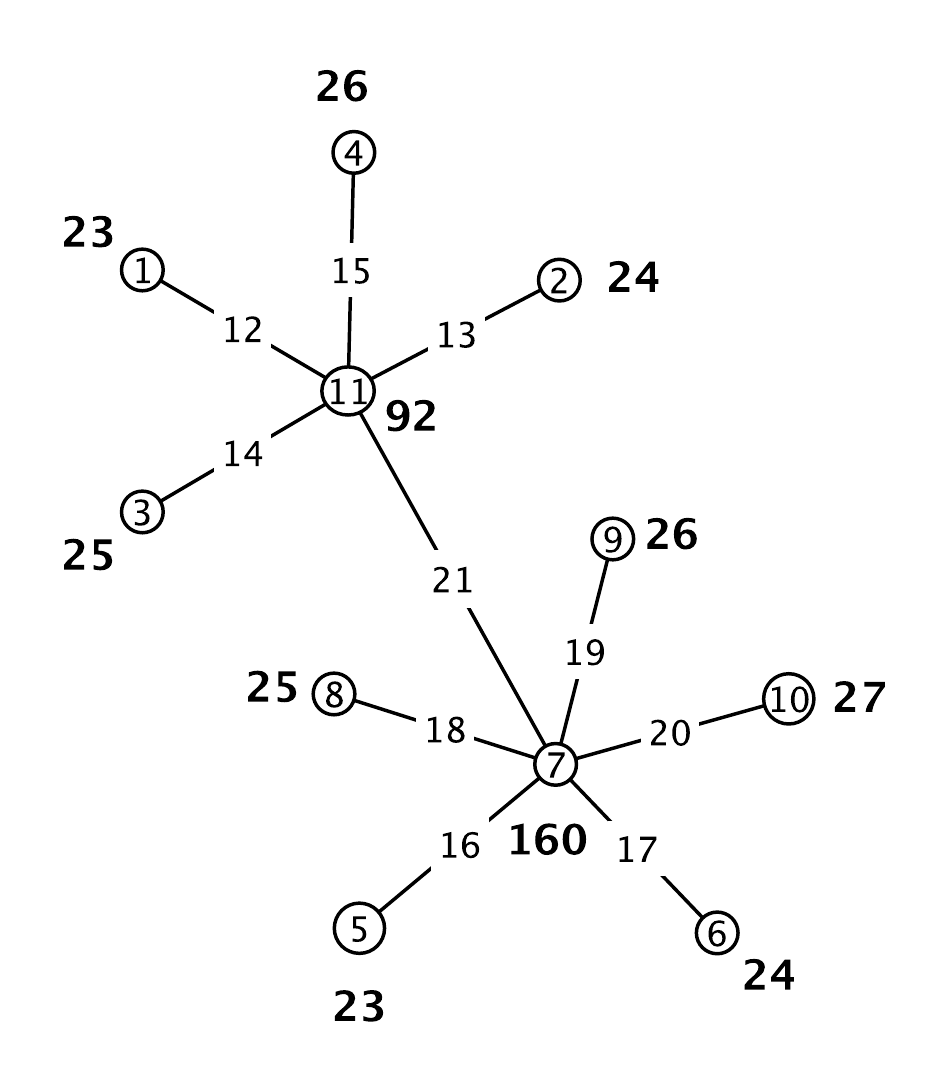}
    \caption{svtla labeling of bi-star.}
\end{figure}
\begin{theorem} \label{th:sv-corona}
If $G$ is an $r$-regular graph then $\sv(G \circ K_1) \le \sv(G)+1$.
\end{theorem}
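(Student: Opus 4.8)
The plan is to lift an optimal svtla labeling of $G$ to $G \circ K_1$ by a uniform shift, chosen so that all pendant vertices receive one common weight (a single new color) while the copy of $G$ keeps, up to a constant shift, its original weights and hence its original coloring. Fix an svtla labeling $g$ of $G$ realizing $\sv(G)$ colors, write $V(G) = \{v_1, \dots, v_n\}$, let $u_i$ denote the pendant vertex attached to $v_i$, and put $m = |E(G)| = rn/2$. Define $f$ on $G \circ K_1$ by $f(v_i) = g(v_i) + n$, $f(u_i) = g(v_i)$, $f(e) = g(e) + 2n$ for $e \in E(G)$, and $f(v_iu_i) = 3n + 1 - g(v_i)$.

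First I would verify that $f$ is a super vertex total labeling: since $g|_{V(G)}$ is a bijection onto $\{1,\dots,n\}$ and $g|_{E(G)}$ a bijection onto $\{n+1,\dots,n+m\}$, the pendant labels fill $\{1,\dots,n\}$, the $G$-vertex labels fill $\{n+1,\dots,2n\}$, the pendant-edge labels fill $\{2n+1,\dots,3n\}$, and the shifted $G$-edge labels fill $\{3n+1,\dots,3n+m\}$, so $f\colon V \cup E \to \{1,\dots,3n+m\}$ is a bijection with $f(V) = \{1,\dots,2n\}$. Then I would compute weights. Since $NT(u_i) = \{v_i, v_iu_i\}$, we get $w(u_i) = (g(v_i)+n) + (3n+1-g(v_i)) = 4n+1$ for every $i$. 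For $v_i$, the set $NT(v_i)$ consists of its $r$ neighbors in $G$, its $r$ incident $G$-edges, the pendant $u_i$, and the edge $v_iu_i$; summing labels and using $r$-regularity, the term $f(u_i)=g(v_i)$ cancels the $-g(v_i)$ inside $f(v_iu_i)$, leaving $w(v_i) = w_g(v_i) + 3rn + 3n + 1$, where $w_g(v_i)$ is the weight of $v_i$ in $G$ under $g$.

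Because the correction $3rn+3n+1$ is the same for all $i$, we have $w(v_i)=w(v_j)$ if and only if $w_g(v_i)=w_g(v_j)$; in particular $w(v_i)\ne w(v_j)$ whenever $v_iv_j \in E(G)$, and $\{w(v_i) : 1 \le i \le n\}$ has at most $\sv(G)$ distinct values. It then remains to handle the pendant edges and count colors: for $r \ge 1$ and $n \ge 1$ we have $w(v_i) = w_g(v_i) + 3rn+3n+1 \ge 6n+2 > 4n+1 = w(u_i)$, so each pendant edge is properly colored and $4n+1$ is a color not among the $w(v_i)$. Hence $f$ is an svtla labeling of $G\circ K_1$ using at most $\sv(G)+1$ colors, which proves the bound. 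I do not anticipate a real obstacle; the one idea that makes it work is the coupled choice $f(u_i)=g(v_i)$ and $f(v_iu_i)=3n+1-g(v_i)$, which simultaneously makes all pendant weights equal and keeps the shift added to each $w_g(v_i)$ constant — the rest is bookkeeping on the four label blocks and a single size estimate.
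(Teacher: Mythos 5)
Your proposal is correct and follows essentially the same strategy as the paper: lift an optimal svtla labeling of $G$ by block-shifting labels and pair each pendant edge label complementarily with a vertex label so that all pendant weights coincide while every $w(v_i)$ is the old weight plus a constant (using $r$-regularity), giving $\sv(G)+1$ colors. Your particular block assignment (pendants get $\{1,\dots,n\}$, $G$-vertices $\{n+1,\dots,2n\}$, pendant edges $\{2n+1,\dots,3n\}$, $G$-edges shifted by $2n$) is just a cosmetic variant of the paper's, and your explicit check that $w(v_i)>4n+1$ and the bijection verification are welcome bits of care.
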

\begin{proof}
Let $G$ be an $r$-regular graph with $V(G) = \{v_1,v_2, \dots, v_n\}$ and $m$ edges. Let $f$ be svtla labeling of $G$. Without loss of generality, we may assume that $f(v_i) = i$, for $1 \le i \le n$. Let $\{x_1, x_2, \dots, x_n\}$ be newly added vertices to obtain $H = G \circ K_1$ such that there is an edge $e_{i} = v_ix_i$ for each $i,\;1 \le i \le n$. Define super vertex total labeling $g$ of $G \circ K_1$ by
\begin{align*}
&g(v_i) = f(v_i) = i \quad &1 \le i \le n\\
&g(x_i) = n + i \quad &1 \le i \le n\\
&g(e_i) = 2n+m-i+1 \quad &1 \le i \le m\\
&g(e) = 2n + f(e) \quad &\mbox{where } e \in E(G).
\end{align*}
 Now we calculate the weight of each vertex in $G \circ K_1$ induced by super vertex total labeling  $g$.
For each $i, ~1 \le i \le n$, $w_g(x_i) = g(v_i) + g(e_{i}) = 2n+m+1$ and $g(x_i) + g(e_i) = 3n+m+1$. Now for any vertex $v_i \in V(G)$, 
\begin{align*}
w_g(v_i) &= \sum_{x \in NT_H(v_i)}g(x)\\
         &= g(x_i) + g(e_i) + \sum_{x \in NT_G(v_i)}g(x)\\
         &= (3n+m+1) + \sum_{x \in NT_G(v_i)}g(x)\\
         &= (3n+m+1) + \sum_{u \in N_G(v_i)}g(u) + \sum_{uv_i \in E(G)}g(uv_i)\\
         &= (3n+m+1) + \sum_{u \in N_G(v_i)}f(u) + \sum_{uv_i \in E(G)}(2n +  f(uv_i))\\
         &= (3n+m+1) + 2nr + \sum_{u \in N_G(v_i)}f(u) + \sum_{uv_i \in E(G)}f(uv_i)\\
         &= (3n+m+1) + 2nr + w_f(v_i).
\end{align*}
 Which is independent of $i$. Hence, $g$ is a svtla labeling of $G \circ K_1$ and $\sv(G \circ K_1) \le \sv(G) + 1$.
\end{proof}
Also, in addition to regular graphs, some non-regular graphs follow the inequality obtained in Theorem~\ref{th:sv-corona}. For example: $\sv(K_4 - e) = 3$ and $\sv((K_4 - e) \circ K_1) = 4 \le \sv(K_4 - e) + 1$ (see Figure~\ref{fig:svtla11}). The following question arises naturally.\\
\begin{problem} 
Characterise the graphs $G$ for which $\sv(G \circ K_1) = \sv(G)+1$.
\end{problem}
\begin{problem}
Let $G$ and $H$ be super vertex total local antimagic graphs. Determine the $\sv(G \circ H)$.
\end{problem}
\begin{figure}
    \centering
    \includegraphics[scale = 0.3]{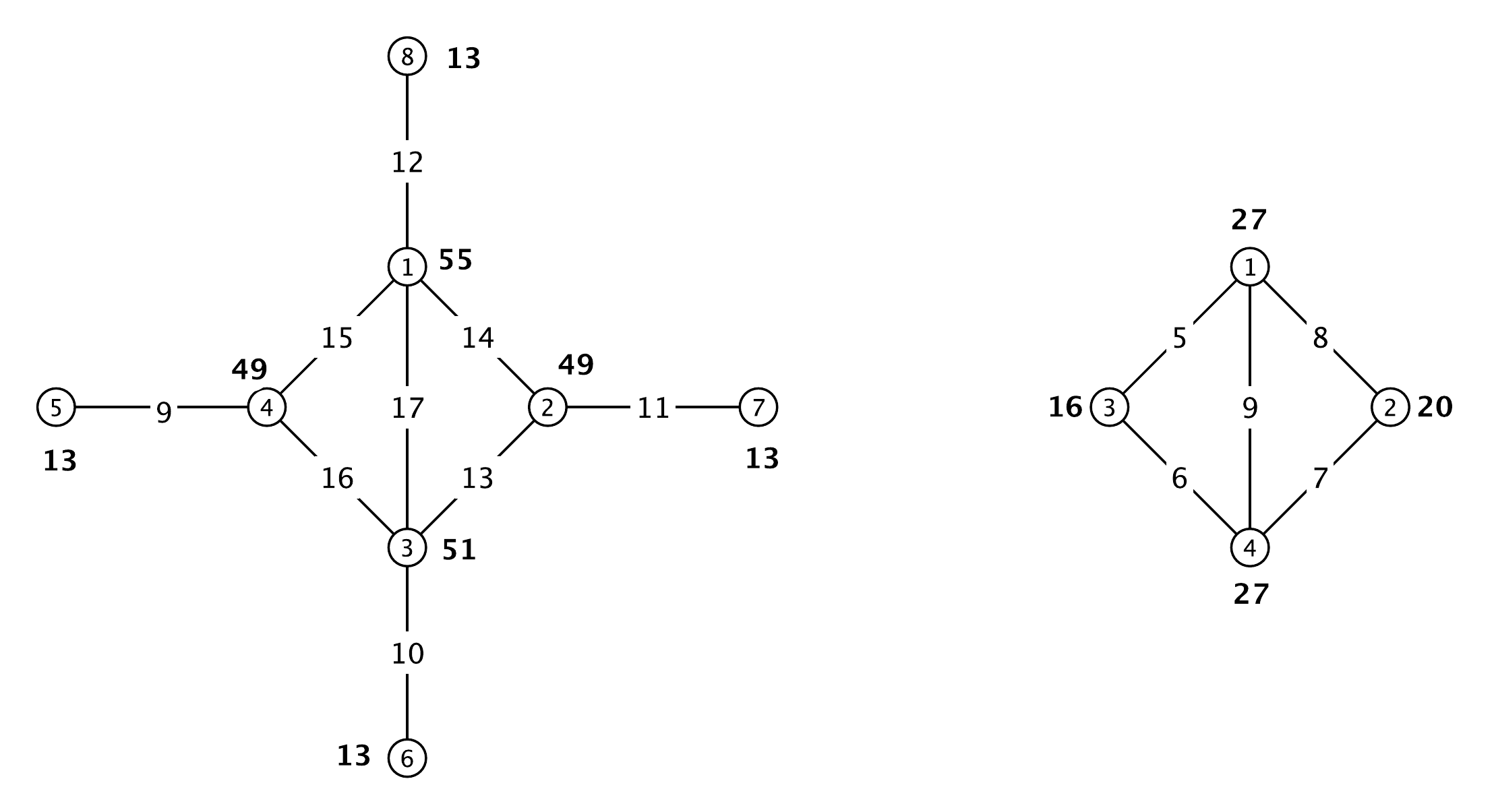}
    \caption{svtla labeling of $(K_4 - e) \circ K_1$ and $K_4 - e$.}
    \label{fig:svtla11}
\end{figure}
\section{Super Edge Total Local Antimagic Labeling}\label{sec:setla}
\begin{proposition} \label{th:se-leaves}
For any graph $G$ with a vertex $v$ having the largest number of pendent vertices $l$, \(\se(G) \ge l+1\).
\end{proposition}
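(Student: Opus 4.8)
The plan is to mirror the argument of Proposition~\ref{th:sv-leaves} almost verbatim, since the pendent-vertex obstruction does not actually use the ``super'' constraint on the edge labels, only the bijectivity of the total labeling. Let $v_1, v_2, \dots, v_l$ be the pendent vertices adjacent to $v$, and let $f$ be an arbitrary setla labeling of $G$. First I would compute the weight of each pendent vertex: since $v_i$ has degree one and its unique incident edge is $vv_i$, we get $NT(v_i) = \{v, vv_i\}$ and hence $w(v_i) = f(v) + f(vv_i)$.

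Next, observe that the $l$ edges $vv_1, vv_2, \dots, vv_l$ are distinct edges of $G$, so because $f$ is a bijection on $V \cup E$ the labels $f(vv_1), \dots, f(vv_l)$ are pairwise distinct. Adding the common term $f(v)$ preserves distinctness, so the weights $w(v_1), \dots, w(v_l)$ are pairwise distinct. Finally, for each $i$ we have $vv_i \in E$, and since $f$ is a total local antimagic labeling this forces $w(v) \ne w(v_i)$. Therefore the $l+1$ vertices $v, v_1, \dots, v_l$ receive $l+1$ distinct colours under the colouring induced by $f$, whence any setla labeling uses at least $l+1$ colours and $\se(G) \ge l+1$.

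I do not anticipate a genuine obstacle here; the only point worth flagging is that one should not be tempted to invoke the condition $f(E) = \{1, \dots, |E|\}$ — it is unnecessary, and the same reasoning in fact shows that every total local antimagic labeling (super or not) needs at least $l+1$ colours. One could optionally remark on this afterwards, but for the statement as given the three lines above suffice.
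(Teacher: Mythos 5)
Your proof is correct and follows essentially the same route as the paper's: compute $w(v_i) = f(v) + f(vv_i)$ for the pendent vertices, use bijectivity of $f$ to get pairwise distinct weights, and use the local antimagic condition on the edges $vv_i$ to separate $w(v)$ from each $w(v_i)$. Your version is in fact slightly cleaner, since the paper asserts $w(v) > w(v_i)$ where only $w(v) \ne w(v_i)$ is needed (and is what actually follows from the definition), and your remark that the ``super'' condition plays no role is accurate.
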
 
\begin{proof}
Let $G$ be a graph on $n$ vertices and $v_1, v_2, \dots, v_l$ be pendent vertices at \(v\). Let $f$ be setla labeling. Then the $l$ weights \(w(v_i) = f(v) + f(vv_i)\) are all distinct and $w(v) > w(v_i)$ where $1 \le i \le l$. Hence, $f$ induces $l+1$ colors. This proves the theorem.
\end{proof}
The proof of the following corollary is evident from Proposition \ref{th:sv-leaves}.
\begin{corollary}
For a star $\se(K_{1,n}) = n + 1$.
\end{corollary}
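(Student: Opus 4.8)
The plan is to obtain the lower bound directly from Proposition~\ref{th:se-leaves} and the upper bound from a single explicit labeling, exactly mirroring the proof of Corollary~\ref{th:sv-star}.

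First I would note that the center $c$ of $K_{1,n}$ has all of the other $n$ vertices as pendants, so Proposition~\ref{th:se-leaves} applied with $l=n$ gives $\se(K_{1,n}) \ge n+1$ at once. For the matching upper bound it in fact suffices to exhibit \emph{any} setla labeling of $K_{1,n}$: since $K_{1,n}$ has only $n+1$ vertices, every proper vertex colouring of it, in particular every colouring induced by a setla labeling, uses at most $n+1$ colours, so $\se(K_{1,n}) \le n+1$ as soon as a setla labeling is known to exist (and this existence is exactly what makes $\se(K_{1,n})$ well defined).

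To produce such a labeling, write the leaves as $v_1, \dots, v_n$ and, since ``super edge'' forces $f(E) = \{1,\dots,n\}$, set $f(cv_i) = i$ for $1 \le i \le n$; then distribute the remaining labels $n+1, \dots, 2n+1$ over the vertices by $f(v_i) = n+i$ and $f(c) = 2n+1$. This $f$ is a bijection $V\cup E \to \{1,\dots,2n+1\}$ with $f(E) = \{1,\dots,n\}$, hence a super edge total labeling. One then computes $w(v_i) = f(c) + f(cv_i) = 2n+1+i$, which are pairwise distinct, and $w(c) = \sum_{i=1}^n f(v_i) + \sum_{i=1}^n f(cv_i) = 2n^2+n$. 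The only edges of $K_{1,n}$ are the $cv_i$, so the sole condition to check is $w(c) \ne w(v_i)$ for each $i$; since $w(c) = 2n^2+n > 3n+1 \ge w(v_i)$ for all $n \ge 2$ (and the case $n=1$ is just $P_2$, handled directly), $f$ is a setla labeling. It induces the $n+1$ distinct colours $\{2n+2,\dots,3n+1\} \cup \{2n^2+n\}$, giving $\se(K_{1,n}) \le n+1$ and hence equality.

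Every step here is routine, so there is no substantial obstacle; the only bookkeeping point is the inequality $w(c) > w(v_i)$, which is a single elementary estimate. One could even bypass that by the abstract observation that $w(c)$ equals the sum of all $2n+1$ labels minus $f(c)$, and for $n \ge 2$ this already dominates $\max_i w(v_i)$.
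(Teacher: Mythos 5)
Your proof is correct and follows essentially the same route as the paper: the lower bound comes from Proposition~\ref{th:se-leaves} with $l=n$, and the upper bound from an explicit super edge total labeling (edges labelled $1,\dots,n$, leaves $n+1,\dots,2n$, centre $2n+1$), which is exactly the analogue of the paper's labeling for Corollary~\ref{th:sv-star} and of the labeling shown in its figure. The weight computation $w(v_i)=2n+1+i$ versus $w(c)=2n^2+n$ checks out, so nothing further is needed.
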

\begin{figure}[ht]
    \centering
    \includegraphics[scale = 0.5]{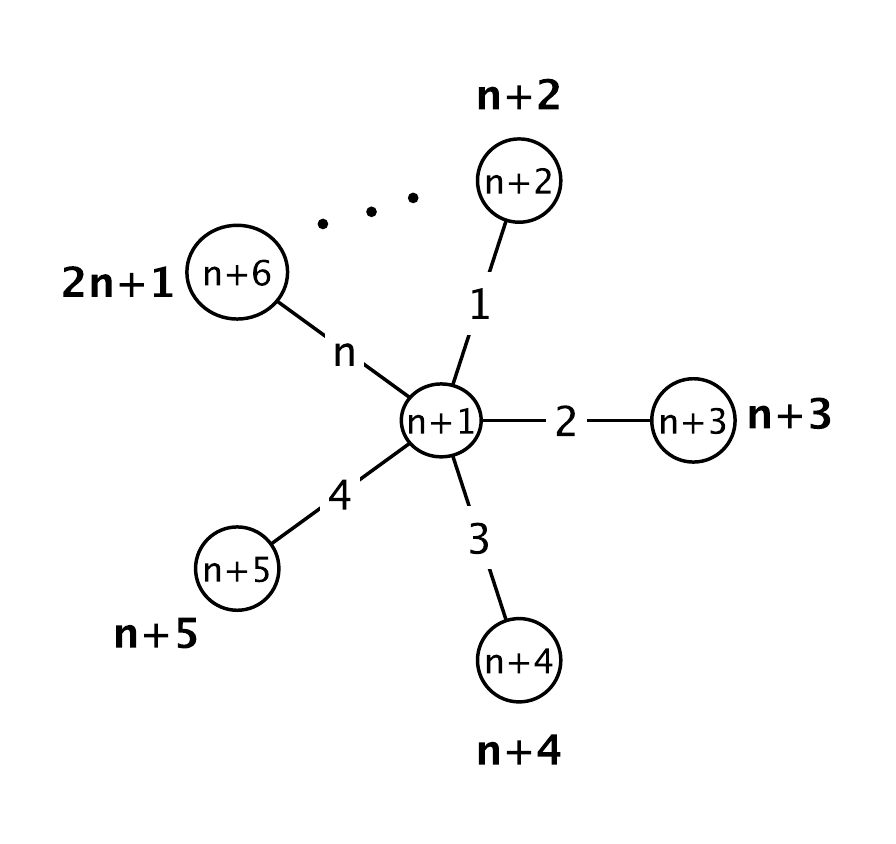}
    \caption{setla labeling of a star.}
\end{figure}
\begin{corollary}
If a tree $T$  with the largest number of pendent vertices equals $l$ at a vertex, then $\se(T) \ge l+1$.
\end{corollary}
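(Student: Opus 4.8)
The plan is to observe that this corollary is nothing more than the specialization of Proposition~\ref{th:se-leaves} to the class of trees. Every tree $T$ (on at least two vertices) is a finite simple undirected graph without isolated vertices, and by hypothesis $T$ is a setla graph, so the assumptions of Proposition~\ref{th:se-leaves} are met: there is a vertex $v$ adjacent to exactly $l$ pendent vertices, with $l$ maximal over all vertices of $T$. Applying the proposition directly gives $\se(T) \ge l+1$. In other words, the only step is to invoke the already-proved result, and there is essentially no content beyond recognizing that trees are a subclass of the graphs treated there.

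If a self-contained argument is preferred, I would simply reproduce the short counting step. Fix any setla labeling $f$ of $T$, let $v_1,\dots,v_l$ be the pendent vertices adjacent to $v$, and note $w(v_i) = f(v) + f(vv_i)$. Since the edge labels $f(vv_1),\dots,f(vv_l)$ are pairwise distinct, the weights $w(v_1),\dots,w(v_l)$ are pairwise distinct; moreover each $w(v_i)$ is strictly smaller than $w(v)$, because $w(v)$ contains the summand $f(vv_i)$ together with additional positive terms (the labels of the other edges at $v$ and of the neighbours of $v$). Hence the $l+1$ vertices $v,v_1,\dots,v_l$ receive $l+1$ distinct colours under the coloring induced by $f$, and since $f$ was arbitrary we conclude $\se(T)\ge l+1$. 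There is no genuine obstacle here; the only mild point worth flagging is that one needs $T$ to admit at least one setla labeling for $\se(T)$ to be defined, which is exactly the standing assumption that $T$ is a setla graph.
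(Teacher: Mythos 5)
Your first paragraph is exactly the paper's (implicit) argument: the corollary is stated without proof as an immediate specialization of Proposition~\ref{th:se-leaves} to trees, so invoking that proposition is all that is required, and your proposal is correct. One caveat about your optional self-contained version: the claim that each $w(v_i)$ is \emph{strictly smaller} than $w(v)$ does not follow from the reasoning given and can in fact fail (e.g.\ for $T=K_2$ when $v$ carries the larger vertex label), but this is harmless because the only fact needed is $w(v)\ne w(v_i)$, which holds automatically since $vv_i\in E$ and $f$ is a total local antimagic labeling.
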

\begin{theorem} \label{th: se-path}
For a path $P_n,\; 3 \le \se(P_n) \le 5$.
\end{theorem}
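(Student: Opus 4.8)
The plan is to mirror the structure of the proof of Theorem~\ref{th:sv-path} as closely as possible, since the induced weights now use the total \emph{open} neighborhood with $f(E) = \{1,2,\dots,|E|\}$ and $f(V) = \{|E|+1,\dots,|E|+|V|\}$ instead of the reverse. For the small cases I would first dispose of $P_2$ and $P_3$ (here $\se(P_2)=2$ and $\se(P_3)=3$ by direct inspection, as in Figure~\ref{fig:sv-path}), and then assume $n \ge 4$.

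For the lower bound I would argue exactly as in Theorem~\ref{th:sv-path}: let $f$ be any setla labeling of $P_n$ with vertices $v_1,\dots,v_n$ and edges $e_i = v_iv_{i+1}$. Then $w(v_1) = f(e_1) + f(v_2)$ while $w(v_3) = f(v_2)+f(v_4)+f(e_2)+f(e_3)$, so $w(v_1)=w(v_3)$ would force $f(e_1) = f(e_2)+f(e_3)+f(v_4)$; but now $f(e_1) \le n-1$ (edge labels are $1,\dots,n-1$) whereas the right-hand side is at least $1 + 2 + (n+1) = n+4 > n-1$ — in fact the gap is even larger than in the svtla case, so the contradiction is immediate. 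Together with $w(v_1)\ne w(v_2)$ and $w(v_2)\ne w(v_3)$ from $f$ being setla, this gives $\se(P_n) \ge 3$.

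For the upper bound I would reuse the same two explicit labelings as in Theorem~\ref{th:sv-path}, but with the roles of $f(V)$ and $f(E)$ swapped to respect the ``super edge'' constraint: assign the edges $e_i$ the values $i$ (or a mild permutation thereof, e.g. $f(e_i) = i$ with a swap near the ends to preserve properness), and assign the vertices $v_i$ values in $\{n, n+1, \dots, 2n-1\}$ following the same residue-class pattern modulo $4$ ($v_i \mapsto n-2+i$ shifted by $\pm 1$ according to $i \bmod 4$). Splitting into the cases $n \equiv 3 \pmod 4$ and $n \not\equiv 3 \pmod 4$, and within the latter the subcases $n \equiv 0$, $1$, $2 \pmod 4$, I would then compute $w(v_i)$ for the interior vertices and check it takes only two values (one for odd $i$, one for even $i$), and separately compute $w(v_1)$ and $w(v_n)$. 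The interior weight is again an expression of the form $(\text{linear in }i) + f(v_{i-1}) + f(v_{i+1})$ whose $i$-dependence cancels exactly as before, leaving a two-valued function; the endpoints contribute at most three extra colors, giving $\se(P_n) \le 5$ in general and sharper bounds ($\le 3$ or $\le 4$) in the favorable residue classes.

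The main obstacle is not conceptual but bookkeeping: I must verify that the swapped labeling is still a \emph{bijection} onto $\{1,\dots,2n-1\}$ with $f(E) = \{1,\dots,n-1\}$ exactly, which requires checking the residue-class shifts on the $v_i$ hit each value in $\{n,\dots,2n-1\}$ once (the end corrections for $n \equiv 3$ need care, as in Theorem~\ref{th:sv-path} where $f(v_n)=n$ is forced). I would also need to double-check that the constant $2$ appearing between the odd-$i$ and even-$i$ interior weights does not accidentally coincide with an endpoint weight in a way that merges colors in a case where I claimed a larger bound — but since the endpoint weights are of order $2n$–$3n$ while interior weights are of order $4n$, no such collision occurs for $n\ge 4$, and the color count follows by simply enumerating the distinct weight values produced.
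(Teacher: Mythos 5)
Your lower bound is fine and matches the paper: comparing $w(v_1)=f(v_2)+f(e_1)$ with $w(v_3)$ and using $f(e_1)\le n-1$ versus $f(v_4)+f(e_2)+f(e_3)\ge n+3$ gives $w(v_1)\ne w(v_3)$, hence $\se(P_n)\ge 3$. The gap is in the upper bound. Your proposed labeling puts the mod-$4$ residue pattern on the \emph{vertices} (values $\approx n-2+i$, increasing in $i$) and plain increasing labels $f(e_i)=i$ on the edges. But the cancellation you invoke only happens when one of the two label sequences is increasing in $i$ and the other decreasing: in Theorem~\ref{th:sv-path} the patterned vertex labels grow like $i$ while $f(e_{i-1})+f(e_i)=(2n-(i-1))+(2n-i)$ shrinks like $-2i$, and the slopes cancel. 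With your choice both contributions grow, so for interior vertices $w(v_i)=f(v_{i-1})+f(v_{i+1})+f(e_{i-1})+f(e_i)\approx 4i+2n$, which takes on the order of $n$ distinct values; the claim that ``the $i$-dependence cancels exactly as before, leaving a two-valued function'' is false for the labeling you wrote down, and no bounded color count follows. The paper's construction does the swap the other way: the residue pattern sits on the small labels, i.e.\ on the edges ($f(e_i)\approx i$ adjusted by residues mod $4$, values in $\{1,\dots,n-1\}$), while the vertices get the plain \emph{decreasing} labels $f(v_i)=2n-i$, restoring the slope cancellation.

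Two further bookkeeping points would still need fixing even after reorienting the labels. First, since the patterned labels now occur at the consecutive indices $i-1$ and $i$ (opposite parities) rather than at $i-1$ and $i+1$ (same parity), the interior weights are three-valued, namely $4n-2$, $4n-1$, $4n$ according to $i\bmod 4$, not two-valued; this is precisely why the final bound is $5$ (three interior values plus up to two endpoint values), so your accounting of ``two interior values plus at most three endpoint colors'' does not reflect what the construction actually produces. Second, the exceptional residue class shifts: because the pattern lives on the $n-1$ edges rather than on the $n$ vertices, the end correction is needed when $n\equiv 0\pmod 4$ (the paper adjusts $f(e_{n-2}),f(e_{n-1})$ there), not when $n\equiv 3\pmod 4$ as in Theorem~\ref{th:sv-path}; with your case split the ``generic'' formula would be applied in a case where it is not a bijection onto $\{1,\dots,n-1\}$.
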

\begin{proof}
It is easy to see that $\se(P_2)=2$ and $\se(P_3) = 3$ (see Figure \ref{fig:sv path}). Let $P_n$ be a path  with vertex set $\{v_1,v_2, \dots, v_n\}$ and $e_i = v_iv_{i+1}$ be edges, where $ 1\le i \le n-1$, where $n \ge 4$. First, we establish the lower bound. Let $f$ setla labeling of $P_n$. Then, $w(v_1) = f(v_2) + f(e_1)$ and $w(v_3) = f(v_2) + f(v_4) + f(e_2) + f(e_3)$. If $w(v_1) = w(v_3)$ then we obtain $f(e_1) = f(v_4) + f(e_2) + f(e_3)$. Which is impossible since $f(e_1) \le n-1$ and $f(v_4) + f(e_2) + f(e_3) \ge n$. Therefore, $w(v_1) \ne w(v_3)$. Also $w(v_1) \ne w(v_2)$ and $w(v_2) \ne w(v_3)$ since $f$ is setla labeling and $v_1v_2, v_2v_3 \in E(P_n)$. This proves that \begin{equation} \label{eq:sepath}
    \se(P_n) \ge 3.
\end{equation}
To prove the upper bound, we consider the following two cases:\\
\\
\textbf{Case 1:} When $n \equiv 0(\bmod~4)$, define super edge total labeling $f$ by\\
\begin{align*}
        f(e_i) &= \begin{cases}
                i-1  \quad & \text{if } i \equiv 0(\bmod ~4)\\
                i  \quad & \text{if } i \equiv 1 \mbox{ or } 2(\bmod~4) \text{ and } i \ne n-2\\
                i+1  \quad & \text{if } i \equiv 3(\bmod~4) \text{ and } i \ne n-1\\
                n-1 \quad & \text{if } i=n-2\\
                n-2 \quad & \text{if } i=n-1
                \end{cases} 
\end{align*}
and $f(v_i) = 2n-i$. Therefore,
$w(v_1) = f(v_2) + f(e_1) = 1+(2n-2) = 2n-1$, $w(v_{n}) = f(v_{n-1}) + f(e_{n-1}) = (n+1) + (n-2) = 2n-1$,\\
$w(v_{n-1}) = f(v_{n-2}) + f(v_n) + f(e_{n-2}) + f(e_{n-1}) = (n+2) + n + (n-1) + (n-2) = 4n-1$,\\
$w(v_{n-2}) = f(v_{n-3}) + f(v_{n-1}) + f(e_{n-3}) + f(e_{n-2}) = (n+3) + (n+1) + (n-2) + (n-1) = 4n$. \\
Now for  each $i, \; 2 \le i \le n-3$,
\begin{align*}
    w(v_i) &= f(v_{i-1}) + f(v_{i+1}) + f(e_{i-1}) + f(e_{i})\\
           &= 2n-(i-1) + 2n-(i+1)+ f(e_{i-1}) + f(e_{i})\\
           &= 4n-2i + f(e_{i-1}) + f(e_{i})\\
           &= 4n-2i + \begin{cases}
            i + (i- 1) &\text{if } i \equiv 0(\bmod~4)\\
           (i-2)+i & \text{if } i \equiv 1(\bmod~4)\\
           (i-1)+i & \text{if } i \equiv 2(\bmod~4)\\
           (i-1)+(i+1) & \text{if } i \equiv 3(\bmod~4), \text{ and } i \ne n
           \end{cases}\\
           &= \begin{cases}
           4n-1 & \text{if $i$ is even}\\
           4n-2 & \text{if } i \equiv 1(\bmod~4)\\
           4n & \text{if } i \equiv 3(\bmod~4).
           \end{cases}
\end{align*}
Observe that $f$ is setla labeling and induces $4$ colors.\\
\\
\textbf{Case 2:} When $n \not \equiv 0(\bmod\ 4)$, define super edge total labeling $f$ by
\begin{align*}
        f(e_i) &= \begin{cases}
                i-1  \quad & \text{ if } i \equiv 0(\bmod~4) \\
                i  \quad & \text{ if } i \equiv 1 \mbox{ or } 2(\bmod~4) \\
                i+1  \quad & \text{ if } i \equiv 3(\bmod~4)
                \end{cases} 
\end{align*}
and $f(v_i) = 2n-i$.
Therefore, $w(v_1) = f(v_2) + f(e_1) = (2n-2) + 1 = 2n-1$,
\begin{align*}
    w(v_n)  &= f(v_{n-1}) + f(e_{n-1})\\
            &= n+1+
            \begin{cases}
            n-2 & \text{if } n \equiv 1(\bmod~4)\\
            n-1 & \text{if } n \equiv 2\mbox{ or } 3(\bmod~4)
            \end{cases}\\
            &=
            \begin{cases}
            2n-1 & \text{if } n \equiv 1(\bmod~4)\\
            2n & \text{if } n\equiv 2 \mbox{ or } 3(\bmod~4)
            \end{cases}
\end{align*}
and for $2 \le i \le n-1$,
\begin{align*}
    w(v_i) &= f(v_{i-1}) + f(v_{i+1}) + f(e_{i-1}) + f(e_{i})\\
           &= [2n-(i-1)] + [2n-(i+1)] + f(e_{i-1}) + f(e_{i})\\
           &= (4n-2i) + f(e_{i-1}) + f(e_{i})\\
           &=(4n-2i) +
           \begin{cases}
            i + (i-1)  & \text{if $i \equiv 0(\bmod~4)$}\\
            (i-2)+ i & \text{if $i \equiv 1(\bmod~4)$}\\
            (i-1)+i & \text{if $i \equiv 2(\bmod~4)$}\\
            (i-1)+(i+1) & \text{if $i \equiv 3(\bmod~4)$}
           \end{cases}\\
           &= \begin{cases}
           4n-1 & \text{if $i$ is even}\\
           4n-2 & \text{if } i \equiv 1(\bmod~4)\\
           4n & \text{if } i \equiv 3(\bmod~4)
           \end{cases}
\end{align*}
\textbf{Subcase (i):} When $n \equiv 1(\bmod~4)$, $w(v_1) = w(v_n) = 2n-1$ and for each $i,~2 \le i \le n-1$, $ w(v_i)$ is $4n-1$ or $4n-2$ or $4n$. Therefore, $\se(P_n) \le 4$.\\
\\
\textbf{Subcase (ii):} When $n \equiv 2 \mbox{ or } 3(\bmod\ 4)$, $w(v_1) = 2n-1$, $w(v_n) = 2n$ and for each $i,~ 2 \le i \le n-1$, $ w(v_i)$ is $4n-1$ or $4n-2$ or $4n$. Therefore, $\se(P_n) \le 5$.\\
Hence, $3 \le \se(P_n) \le 5$.
This completes the proof.
\end{proof}
\begin{figure}[ht]
    \centering
    \includegraphics[scale = 0.6]{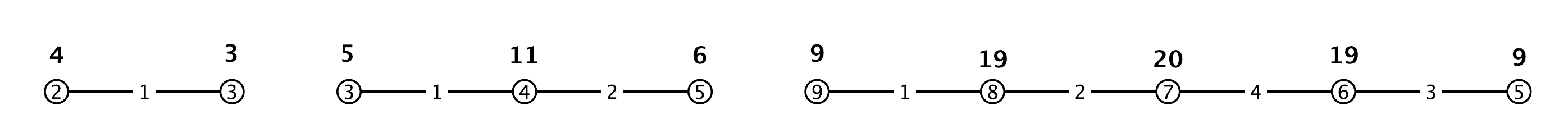}
    \caption{setla labeling of $P_2, P_3$ and $P_5$.}
    \label{fig:sv path}
\end{figure}
\begin{proposition} \label{th:se-c4}
 $\se(C_4) = 3$.   
\end{proposition}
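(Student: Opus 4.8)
The plan is to establish $\se(C_4)\ge 3$ and $\se(C_4)\le 3$ separately; since $\chi(C_4)=2\le \se(C_4)$ already holds, the content lies in ruling out a $2$-coloring and exhibiting a $3$-coloring. Throughout, write $C_4 = v_1v_2v_3v_4v_1$ with $e_i = v_iv_{i+1}$ (indices mod $4$), so that in any setla labeling $f$ we have $f(\{e_1,e_2,e_3,e_4\}) = \{1,2,3,4\}$ and $f(\{v_1,v_2,v_3,v_4\}) = \{5,6,7,8\}$.

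For the lower bound I would use that $C_4$ is connected and bipartite, so its only partition into two independent sets is $\{v_1,v_3\},\{v_2,v_4\}$; hence a setla labeling inducing just $2$ colors would have to satisfy $w(v_1) = w(v_3)$ and $w(v_2) = w(v_4)$. Reading off the weights from $NT(v_1) = \{v_2,v_4,e_1,e_4\}$, $NT(v_3)=\{v_2,v_4,e_2,e_3\}$, $NT(v_2)=\{v_1,v_3,e_1,e_2\}$, $NT(v_4)=\{v_1,v_3,e_3,e_4\}$, these two equalities reduce to $f(e_1)+f(e_4) = f(e_2)+f(e_3)$ and $f(e_1)+f(e_2) = f(e_3)+f(e_4)$. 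Since $f(e_1)+f(e_2)+f(e_3)+f(e_4) = 1+2+3+4 = 10$, each of those sums must equal $5$, so $f(e_1)+f(e_4) = 5 = f(e_1)+f(e_2)$, forcing $f(e_2) = f(e_4)$ — impossible for a bijection. Thus $\se(C_4)\ge 3$.

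For the upper bound I would simply display one labeling and verify it. Take $f(e_1)=1,\ f(e_2)=2,\ f(e_3)=3,\ f(e_4)=4$ and $f(v_1)=5,\ f(v_2)=6,\ f(v_3)=8,\ f(v_4)=7$. Then $w(v_1) = f(v_2)+f(v_4)+f(e_1)+f(e_4) = 18$, $w(v_3) = f(v_2)+f(v_4)+f(e_2)+f(e_3) = 18$, $w(v_2) = f(v_1)+f(v_3)+f(e_1)+f(e_2) = 16$, and $w(v_4) = f(v_1)+f(v_3)+f(e_3)+f(e_4) = 20$; adjacent vertices get distinct weights and exactly the three colors $\{16,18,20\}$ appear, so this $f$ is a setla labeling witnessing $\se(C_4)\le 3$. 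Combining the two bounds yields $\se(C_4)=3$. There is essentially no obstacle in this small case — the only point requiring care is the lower bound, where one must be sure that a $2$-coloring of $C_4$ forces both opposite pairs of vertices to be monochromatic (this is where connectedness and the uniqueness of the bipartition are used), after which the arithmetic built on the fixed edge-label sum $1+2+3+4=10$ closes the argument at once.
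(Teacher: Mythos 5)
Your proof is correct and takes essentially the same approach as the paper: the lower bound rests on the same two forced equalities $f(e_1)+f(e_4)=f(e_2)+f(e_3)$ and $f(e_1)+f(e_2)=f(e_3)+f(e_4)$ (the paper combines them directly to get $f(e_1)=f(e_3)$, while you route through the fixed edge-label sum $10$ to get $f(e_2)=f(e_4)$), and the upper bound is an explicit setla labeling, which the paper supplies via its figure.
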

\begin{proof}
 Consider a setla cycle $C_4$ on vertices $v_1, v_2, v_3, v_4$ and with setla labeling $f$. Since, $f$ is setla and $v_1v_2, v_2v_3 \in E(C_4)$, we have $w(v_1) \ne w(v_2)$ and $w(v_2) \ne w(v_3)$. We will show that $\se(C_4) = 3$. On the contrary, suppose $\se(C_4) = 2$. We must have, $w(v_1) = w(v_3)$, which implies $f(e_1) + f(e_4) = f(e_2) + f(e_3)$ and $w(v_2) = w(v_4)$, which implies implies $f(e_1) + f(e_2) = f(e_3) + f(e_4)$. We obtain a contradiction $f(e_1) = f(e_3)$ from the last two equalities. Hence, $\se(C_4) \ge 3$. Also from Figure~\ref{fig:se-cycles}, $\se(C_4) \le 3$. Therefore, $\se(C_4) = 3$.   
\end{proof}
\begin{theorem} \label{th:se-cycle}
For the cycle $C_n, \; n \ge 3, 3 \le \se(C_n) \le 5$.
\end{theorem}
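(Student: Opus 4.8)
The plan is to prove the two bounds separately, and within each to organise the work according to $n \bmod 4$, exactly as in the companion results for $\sv(C_n)$ (Theorem~\ref{th:sv-cycle}) and $\se(P_n)$ (Theorem~\ref{th: se-path}).

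For the lower bound, when $n$ is odd we get it for free: a setla labeling induces a proper vertex colouring, so $\se(C_n)\ge\chi(C_n)=3$. When $n$ is even, $\chi(C_n)=2$, so I must rule out a setla labeling realising only two vertex weights. Here I would imitate Proposition~\ref{th:se-c4}. If $\se(C_n)=2$ then the induced colouring is the unique proper $2$-colouring, so $w(v_i)$ depends only on the parity of $i$; writing $w(v_i)=f(v_{i-1})+f(v_{i+1})+f(e_{i-1})+f(e_i)$ and subtracting the relations $w(v_i)=w(v_{i+2})$ for all $i$ gives a linear system in the edge labels $f(e_1),\dots,f(e_n)$ and the vertex labels, which (together with the global count $\sum_v w(v)=2\sum_v f(v)+2\sum_e f(e)=4n^2+2n$ from Observation~\ref{ob:se-counting}) one tries to push to a forced coincidence of two labels, as in the $C_4$ case. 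I flag this as the delicate point: for small even $n$ such as $n=6$ the system seems to be satisfiable, so one should check carefully whether the claimed lower bound really holds for every even $n$ or only for $n$ odd together with $n=4$; if the exclusion fails for some even $n$, the statement must be weakened there.

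For the upper bound I would write down an explicit labeling, dual to the one used for $\sv(C_n)$: put $f(v_i)=2n+1-i$, which fills $\{n+1,\dots,2n\}$, and let $f(e_i)$ equal $i$ up to the period‑$4$ perturbation $i\mapsto i,\ i,\ i+1,\ i-1$ (for $i\equiv 1,2,3,0\bmod 4$) already used in Theorem~\ref{th: se-path}, giving the wrap‑around edge $e_n=v_nv_1$ the single remaining value. Then for every interior $v_i$ one computes $w(v_i)=\bigl(f(v_{i-1})+f(v_{i+1})\bigr)+\bigl(f(e_{i-1})+f(e_i)\bigr)=(4n+2-2i)+(2i+c_i)=4n+2+c_i$ with $c_i\in\{-2,-1,0\}$ depending only on $i\bmod 4$, so all interior weights lie in the $3$‑element set $\{4n,4n+1,4n+2\}$. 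The constantly many ``seam'' vertices $v_1,v_2,v_{n-1},v_n$ adjacent to the wrap‑around contribute at most two further values, so the colour count is at most $5$, and a short case analysis on $n\bmod 4$ (the perturbation pattern and the placement of $f(e_n)$ change with the residue, hence so do the seam weights) finishes $\se(C_n)\le 5$.

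The main obstacle is precisely the wrap‑around. A cycle has no free endpoint, so $e_n$ and the vertices $v_1,v_n$ must be handled by hand (this is already visible in the proof of Theorem~\ref{th:sv-cycle}), and I would need to verify simultaneously that the perturbed labeling is still a bijection with $f(E)=\{1,\dots,n\}$ and that the handful of irregular seam weights, together with the two or three ``bulk'' values, never exceed $5$ distinct colours; on the lower‑bound side, the analogous obstacle is making the two‑weight exclusion for even $n$ genuinely tight, which is where I would concentrate the effort.
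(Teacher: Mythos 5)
Your upper-bound construction follows the same scheme as the paper's (vertices get the large labels in decreasing order, edges the period-$4$ perturbation of the identity), but the seam detail you defer is exactly where it breaks, and the repair is a relabeling, not a verification. With $f(v_i)=2n+1-i$ for all $i$, the wrap-around edge receives $f(e_n)=n$ whenever $n\not\equiv 0\pmod 4$ (for $n\equiv 1,2$ directly from the pattern, for $n\equiv 3$ as the single leftover value), and then
$w(v_1)=f(v_2)+f(v_n)+f(e_1)+f(e_n)=(2n-1)+(n+1)+1+n=4n+1$ while
$w(v_2)=f(v_1)+f(v_3)+f(e_1)+f(e_2)=2n+(2n-2)+1+2=4n+1$;
since $v_1v_2$ is an edge, your labeling is not setla for three of the four residue classes (it does work for $n\equiv 0\pmod 4$; try $n=5$ to see the failure concretely). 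Your checklist --- bijectivity and the number of colours --- omits properness at the seam, which is the point that fails. The paper's proof avoids this precisely by transposing the two largest vertex labels, $f(v_1)=2n-1$, $f(v_2)=2n$, which separates the seam weights ($w(v_1)=4n+2$ or $4n+1$, $w(v_2)=4n$, $w(v_3)=4n+3$), and for $n\equiv 3\pmod 4$ it also sets $f(e_n)=n$ and tracks the extra seam vertices, arriving at at most $5$ colours. So the upper bound needs that modification of the labeling, not merely the case check you describe.

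Your hesitation about the lower bound for even $n$ is well founded, and it is in fact the more serious issue. For odd $n$ the bound is immediate from $\chi(C_n)=3$, and Proposition~\ref{th:se-c4} settles $n=4$; but for even $n\ge 6$ neither your sketch nor the paper proves $\se(C_n)\ge 3$ --- the paper's proof only constructs the upper bound for $n\ge 5$. Moreover the bound is genuinely false at $n=6$: label $v_1,\dots,v_6$ by $10,7,12,9,8,11$ and the edges $v_1v_2,v_2v_3,\dots,v_6v_1$ by $1,4,5,2,3,6$. This is a super edge total labeling, and the weights are $25,27,25,27,25,27$, so adjacent weights differ and only two colours occur; hence $\se(C_6)=2$. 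Consequently the two-weight exclusion you were trying to force cannot succeed for every even $n$, and the lower bound in the statement has to be restricted (to odd $n$ together with $n=4$), exactly as you suspected. In short: your upper-bound route is the paper's route but with a defective seam, and your lower-bound worry exposes a real error in the theorem as stated rather than a gap in your own argument alone.
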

\begin{proof}
It is easy to observe that $\se(C_3) = 3$. By Proposition \ref{th:se-c4}, $\se(C_4) = 3$. Now consider the cycle $C_n$  with vertex set $\{v_1,v_2, \dots, v_n\}$ and $e_i = v_iv_{i+1}$ be edges for $n \ge 5$, where subscripts are taken modulo $n$. We consider the following two cases:\\
\\
\textbf{Case 1:} When $n \equiv 3(\bmod~4)$, define super edge total labeling $f$ by
\begin{align*}
    f(e_i) &= \begin{cases}
                i-1  \quad & \text{ if } i \equiv 0(\bmod~4) \\
                i  \quad & \text{ if } i \equiv 1 \text{ or }2(\bmod~4)\\
                i+1  \quad & \text{if } i \equiv 3(\bmod~4), i \ne n \\
                n , \quad & \text{if } i=n
                \end{cases}\\
    f(v_i) &=\begin{cases}
            2n-i+1 \quad & \text { if } i \ne 1, 2\\
            2n-1 \quad & \text{ if } i = 1\\
            2n \quad & \text{ if } i = 2.
            \end{cases}
\end{align*}
Therefore,
\begin{align*}
w(v_1)  &= f(v_2) + f(v_n) + f(e_1) + f(e_n)\\
        &= 2n + (n+1) + 1 + n\\
        &= 4n+2,\\
w(v_2)  &= f(v_1) + f(v_3) + f(e_1) + f(e_2)\\
        &= (2n-1) + (2n-2) + 1 + 2\\
        &= 4n,\\
w(v_3)  &= f(v_2) + f(v_4) + f(e_2) + f(e_3)\\
        &= 2n + (2n-3) + 2 + 4\\
        &= 4n+3,\\
w(v_n)  &= f(v_1) + f(v_{n-1}) + f(e_{n-1}) + f(e_n)\\
        &= (2n-1) + (n+2) + (n-1) + n\\
        &= 5n.
\end{align*}
Now for each $i, \; 4 \le i \le n-1$,
\begin{align*}
    w(v_i) &= f(v_{i-1}) + f(v_{i+1}) + f(e_{i-1}) + f(e_{i})\\
           &= [2n-(i-1)+1] + [2n-(i+1)+1] + f(e_{i-1}) + f(e_{i})\\
           &= 4n+2-2i + f(e_{i-1}) + f(e_{i})\\
           &= 4n+2-2i + \begin{cases}
            i + (i-1) &\text{if $i \equiv 0(\bmod~4)$}\\
           (i-2)+i & \text{if $i \equiv 1(\bmod~4)$}\\
           (i-1)+i & \text{if $i \equiv 2(\bmod~4)$}\\
           (i-1)+(i+1) & \text{if $i \equiv 3(\bmod~4)$}
           \end{cases}\\
           &= \begin{cases}
           4n+1 & \text{if $i$ is even}\\
           4n & \text{if $i \equiv 1(\bmod~4)$}\\
           4n+2 & \text{if $i \equiv 3(\bmod~4)$}.
           \end{cases}
\end{align*}
This proves that $f$ is setla labeling and induces $5$ colors. Hence, $\se(C_n) \le 5$.\\
\\
\textbf{Case 2:} When $n \not \equiv 3(\bmod~4)$, define super edge total labeling $f$ by
\begin{align*}
    f(e_i) &= \begin{cases}
                i-1  \quad & \text{ if } i \equiv 0(\bmod~4) \\
                i  \quad & \text{ if } i \equiv 1 \text{ or }2(\bmod~4)\\
                i+1  \quad & \text{if } i \equiv 3(\bmod~4)
                \end{cases}
\end{align*}
and
\begin{align*}
    f(v_i) =\begin{cases}
            2n-i+1 \quad & \text { if } i \ne 1, 2\\
            2n-1 \quad & \text{ if } i = 1\\
            2n \quad & \text{ if } i = 2.
            \end{cases}
\end{align*}
Therefore, $w(v_2) = f(v_1)+f(v_3)+f(e_1)+f(e_2) = (2n-1)+(2n-2)+1+2 = 4n$, $w(v_3) = f(v_2)+f(v_4)+f(e_2)+f(e_3) = 2n+ (2n-3) + 2  + 4 = 4n+3$,
\begin{align*}
    w(v_1) &= f(v_2) + f(v_n) + f(e_1) + f(e_n)\\
           &= 2n + (n+1) + 1 + 
           \begin{cases}
           n & \text{if $n \equiv 1 \mbox{ or }2(\bmod~4)$}\;  \mbox{and} \; n\ne 1\\
           n-1 & \text{if $n \equiv 0(\bmod~4)$}
           \end{cases}\\
           &= \begin{cases}
           4n+2 &\text{if $n \equiv 1 \mbox{ or } 2(\bmod~4)$}\\
           4n+1 &\text{if $n \equiv 0(\bmod~4)$},
           \end{cases}\\
    w(v_n) &= f(v_1) + f(v_{n-1}) + f(e_{n-1}) + f(e_n)\\
           &= (2n-1) + (n+2) + f(e_{n-1}) + f(e_n)\\
           &= (3n+1) + 
           \begin{cases}
           2n-1 & \text{if $n \equiv 0(\bmod~4)$}\\
           2n-2 & \text{if $n \equiv 1(\bmod~4)$}\\
           2n-1 & \text{if $n \equiv 2(\bmod~4)$}\\
           \end{cases}\\
           &= \begin{cases}
           5n & \text{if $n \equiv 0 \mbox{ or } 2(\bmod~4)$}\\
           5n-1 & \text{if $n \equiv 1(\bmod~4)$}.
           \end{cases}
\end{align*}
Now for each $i,\;2 \le i \le n-1$,
\begin{align*}
    w(v_i)  &= (4n-2i+2)+
            \begin{cases}
            2i-1 & \text{if }  i \equiv 0(\bmod~4)\\
            2i-2 & \text{if }  i \equiv 1(\bmod~4)\\
            2i-1 & \text{if }  i \equiv 2(\bmod~4)\\
            2i & \text{if } i \equiv 3(\bmod~4)
            \end{cases}\\
            &= 
            \begin{cases}
            4n & \text{if } i \equiv 1(\bmod~4)\\
            4n+1 & \text{if } i \equiv 0\mbox{ or }2(\bmod~4)\\
            4n+2 & \text{if } i \equiv 3(\bmod~4).
            \end{cases}
\end{align*}
Observe that $f$ is setla labeling and induces $4$ colors. Hence, $\se(C_n) \le 4$.\\ This completes the proof. 
\end{proof}
\begin{figure}[ht]
    \centering
    \includegraphics[scale = 0.5]{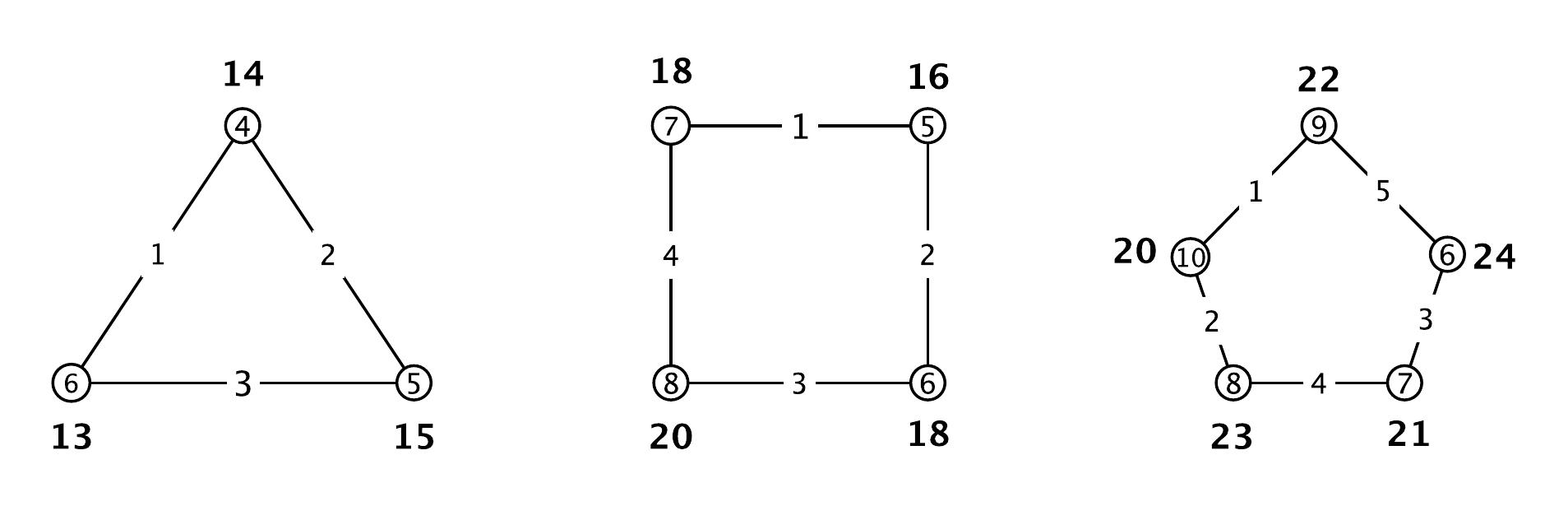}
    \caption{setla labeling of $C_3, C_4$ and $C_5$.}
    \label{fig:se-cycles}
\end{figure}

Now we calculate the setla labeling for a complete bipartite graph $K_{m,n}$ for a few cases. When $m = n =2$, then $K_{2,2} \cong C_4$ and by Proposition \ref{th:se-c4}, $\se(C_4) = 3$ and for $m = 2 $ and $n \ge 3$ we have the following result. \\
\begin{theorem} \label{th:se-bipartite}
$\se(K_{m,n}) = 2$ if and only if $m,n>1, mn >4$ and $m \equiv n (\bmod~2)$.
\end{theorem}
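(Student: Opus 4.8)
The plan is to follow the template of Theorem~\ref{th:sv-bipartite}, the key point being that for a setla labeling of $K_{m,n}$ the $m\times n$ array of edge-labels is forced to be a magic rectangle. Let $X=\{x_1,\dots,x_m\}$ and $Y=\{y_1,\dots,y_n\}$ be the parts. Since $NT(x_i)=Y\cup\{x_iy_j:1\le j\le n\}$ we have $w(x_i)=\sum_{j=1}^n f(y_j)+\sum_{j=1}^n f(x_iy_j)$, and symmetrically $w(y_j)=\sum_{i=1}^m f(x_i)+\sum_{i=1}^m f(x_iy_j)$; observe that the first summand of $w(x_i)$ is independent of $i$ and that of $w(y_j)$ is independent of $j$. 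Throughout we may assume $m,n\ge 2$, the case $\min\{m,n\}=1$ giving a star, already handled.

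First I would prove the ``only if'' direction. Suppose $\se(K_{m,n})=2$, realised by a setla labeling $f$. Because $K_{m,n}$ is connected and bipartite, its unique proper $2$-colouring (up to swapping colours) is the bipartition, so all vertices of $X$ share one weight $W_X$ and all vertices of $Y$ share a second weight $W_Y\ne W_X$. Putting $r_{ij}=f(x_iy_j)$, the $r_{ij}$ are a bijection onto $\{1,\dots,mn\}$; from the formula for $w(x_i)$ all row sums $\sum_j r_{ij}$ coincide, and from the formula for $w(y_j)$ all column sums $\sum_i r_{ij}$ coincide. Hence $[r_{ij}]$ is a magic rectangle $MR(m,n)$, and Theorem~\ref{mr} forces $m,n>1$, $mn>4$, and $m\equiv n\pmod 2$.

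For the ``if'' direction, assume $m,n>1$, $mn>4$, $m\equiv n\pmod 2$. By Theorem~\ref{mr} choose a magic rectangle $[s_{ij}]$ on $\{1,\dots,mn\}$ with common row sum $\tfrac{n(mn+1)}{2}$ and common column sum $\tfrac{m(mn+1)}{2}$. Set $f(x_iy_j)=s_{ij}$ and assign the remaining labels $\{mn+1,\dots,mn+m+n\}$ to the $m+n$ vertices (for instance $f(x_i)=mn+i$, $f(y_j)=mn+m+j$). Then every $x_i$ gets the same weight $W_X=\sum_{j}f(y_j)+\tfrac{n(mn+1)}{2}$ and every $y_j$ the same weight $W_Y=\sum_{i}f(x_i)+\tfrac{m(mn+1)}{2}$, so $f$ is a setla labeling inducing only the two colours $W_X,W_Y$, provided $W_X\ne W_Y$; combined with $\chi(K_{m,n})=2$ this yields $\se(K_{m,n})=2$.

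The one genuinely fiddly point — and the step I expect to be the main (though minor) obstacle — is ensuring $W_X\ne W_Y$. One option is to compute both weights explicitly for the concrete vertex-assignment above and check the inequality directly. A cleaner option: write $A=\sum_j f(y_j)$; since $\sum_i f(x_i)+\sum_j f(y_j)$ is a fixed constant, the equation $W_X=W_Y$ determines $A$ uniquely, whereas choosing which $n$ of the top $m+n$ labels lie in $Y$ lets $A$ range over a full interval of $mn+1\ (>1)$ consecutive integers, so some admissible assignment avoids the one bad value of $A$. Everything else — that the listed weights are correct and that no extra colour classes appear — is immediate from the magic-rectangle property and the fact that the induced colouring consists exactly of the classes $X$ and $Y$.
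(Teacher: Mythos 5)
Your proposal is correct and follows essentially the same route as the paper: both directions rest on the correspondence between a two-weight setla labeling and a magic rectangle $MR(m,n)$ formed by the edge labels, with sufficiency proved by placing the magic rectangle on the edges and the labels $mn+1,\dots,mn+m+n$ on the vertices. The only minor difference is at the step $W_X \ne W_Y$, where the paper computes both weights for the explicit vertex assignment and checks the inequality directly (your first option); your pigeonhole alternative is a small variant, not a genuinely different approach.
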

\begin{proof}
Suppose $m,n>1, mn >4$ and $m \equiv n (\bmod~2)$. Then by Theorem \ref{mr}, there exists a magic rectangle $MR(m,n) = [r_{i,j}]_{m\times n}$ with row sum $\rho = \frac{n(mn+1)}{2}$, column sum $ \sigma = \frac{m(mn+1)}{2}$. Let $\{x_1, x_2, \dots, x_m \}$ and $\{y_1, y_2, \dots, y_n \}$ be bipartition of the vertex set of $K_{m,n}$ with $m \le n$. Define the super edge total labeling $f$ by
\begin{align*}
f(a) = \begin{cases}
        mn+i &\mbox{ if } a = x_i\\
        mn+m+i &\mbox{ if } a = y_i\\
        r_{i,j} &\mbox{ if } a = x_iy_j
        \end{cases}
\end{align*}
Then for any $1 \le i \le m$ and for any $1 \le j \le n$,
\begin{align*}
w(x_i) 
&= \sum_{j=1}^{n} f(y_j) + \rho \\
&= n(mn+m) + \frac{n(n+1)}{2} + \frac{n(mn+1)}{2}\\
&= (3m+1)n^2 + (2m+2)n,
\end{align*}
\begin{align*}
w(y_j)
&= \sum_{i=1}^{n} f(x_i) + \sigma\\
&= m(mn) + \frac{m(m+1)}{2} + \frac{m(mn+1)}{2}\\
&= (3n+1)m^2 + 2m.
\end{align*}
It is easy to verify that $w(x_i) > w(y_j)$ for any $i,~1 \le i \le m$ and for any $j,~1 \le j \le n$. Therefore, $f$ is setla labeling and induces $2$ colors. Hence, $\se(K_{m,n}) \le 2$. We know, $\chi(K_{m,n}) = 2$. Therefore, $\se(K_{m,n}) \geq \chi(K_{m,n}) =2$.\\
\\
Conversely, suppose that $\se(K_{m,n}) = 2$ with setla labeling $f$. We must have $w(x_i) \ne w(y_j)$ for any $i,\;1 \le i \le m$ and for any $j,\;1 \le j \le n$ and weights of vertices in independent sets must be same. We can form a magic rectangle $MR(m,n)$ with $(i,j)$th entry as $f(x_iy_j)$ so that row sum is equal to $w(y_1)$ and the column sum is equal to $w(x_1)$. Then again, by Theorem \ref{mr}, $m$ and $n$ satisfies the required conditions. This completes the proof.
\end{proof}
\begin{proposition}
For any $n \ge 1$,
\begin{align*}
\se(K_{2,n}) = 
\begin{cases}
2 \mbox{ if } n \ge 4 \mbox{ and } m \mbox{ is even}\\
3 \mbox{ if } n \mbox{ is odd or } n=2.
\end{cases}
\end{align*}
\end{proposition}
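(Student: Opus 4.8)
The plan is to treat three ranges of $n$ separately, reducing each to results already in hand. When $n\ge 4$ is even, the pair $(2,n)$ satisfies $2>1$, $n>1$, $2n>4$ and $2\equiv n\pmod 2$, so Theorem~\ref{th:se-bipartite} gives $\se(K_{2,n})=2$ outright. For the remaining values ($n$ odd, or $n=2$) I would first record the lower bound $\se(K_{2,n})\ge 3$: since $K_{2,n}$ has an edge, $\se(K_{2,n})\ge\chi(K_{2,n})=2$; and for these $n$ the hypotheses of Theorem~\ref{th:se-bipartite} fail (the parity condition fails when $n$ is odd, and $mn>4$ fails when $n=2$), so the forward implication of that theorem forbids $\se(K_{2,n})=2$, leaving $\se(K_{2,n})\ge 3$. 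For $n=2$ one may instead quote Proposition~\ref{th:se-c4} directly, and for $n=1$ note $K_{2,1}\cong P_3$, whose setla chromatic number is $3$.

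The real content is the matching upper bound $\se(K_{2,n})\le 3$ via an explicit setla labeling $f$. Writing the bipartition as $\{x,y\}$ and $\{u_1,\dots,u_n\}$, I would set $f(xu_i)=i$ and $f(yu_i)=2n+1-i$ for $1\le i\le n$; these are exactly the edge-labels $\{1,\dots,2n\}$, and, crucially, $f(xu_i)+f(yu_i)=2n+1$ is independent of $i$. The vertices then receive $\{2n+1,\dots,3n+2\}$ via $f(x)=2n+1$, $f(y)=2n+2$ and $f(u_i)=2n+2+i$. Consequently $w(u_i)=f(x)+f(y)+2n+1$ is constant in $i$, so all the $u_i$ lie in one colour class, and a direct summation gives $w(u_i)=6n+4$, $w(x)=3n^2+3n$ and $w(y)=4n^2+3n$. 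Since $w(y)-w(x)=n^2>0$, the three classes of $x$, $y$ and the $u_i$ are pairwise distinct as soon as $w(u_i)$ differs from both $w(x)$ and $w(y)$.

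This last distinctness check is the only mild obstacle, and it is short: $w(x)=w(u_i)$ would force $3n^2-3n-4=0$ and $w(y)=w(u_i)$ would force $4n^2-3n-4=0$, neither of which has an integer root, so the three weights are distinct for every $n\ge 1$ (in fact $3n^2+3n>6n+4$ already for $n\ge 2$). Hence $f$ is a valid setla labeling of $K_{2,n}$ inducing exactly three colours, giving $\se(K_{2,n})\le 3$; combined with the lower bound this yields $\se(K_{2,n})=3$ whenever $n$ is odd or $n=2$, which together with the even case $n\ge 4$ completes the proof.
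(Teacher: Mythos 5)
Your proposal is correct and follows essentially the same route as the paper: Theorem~\ref{th:se-bipartite} settles the even case $n\ge 4$ and supplies the lower bound $\se(K_{2,n})\ge 3$ in the remaining cases, and the upper bound uses the same complementary edge labels $f(xu_i)=i$, $f(yu_i)=2n+1-i$ so that all $u_i$ get a common weight. The only difference is the assignment of the vertex labels (you give $x,y$ the smallest vertex labels where the paper gives them the largest), which merely shifts the three weight values; your distinctness check for them is valid.
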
   
\begin{proof}
We know that $\se(K_{2,1}) = \se(K_{2,2}) = 3$. Let $n \ge 3$. If $n$ is even by Theorem~\ref{th:se-bipartite}, $\se(K_{2,n}) = 2$. Let $n$ be odd. Again by Theorem~\ref{th:se-bipartite}, $\se(K_{2,n}) \ge 3$. We will show that $\se(K_{2,n}) \le 3$. Let $\{x,y\}$ and $\{u_1, u_2, \dots, u_n\}$ be bipartition of $K_{2,n}$, where $n \ge 3$. Define super edge total labeling of $K_{2,n}$ by
$f(x) = 3n+1, f(y) = 3n+2$, for each $i, 1 \le i \le n$, $f(u_i) = 2n+i$, and 
\begin{align*}
    f(au_i) = \begin{cases}
            i & \text{ if } a = x\\
            2n+1-i & \text{ if } a = y
            \end{cases}
\end{align*}
Now we calculate the weights:
For each  $i,\; 1 \le i \le n$, $w(u_i) = 8n+4$,  $w(x) = 3n^2 + n, w(y) = 4n^2 + n$. It is easy verify that $w(u_i) < w(x) < w(y)$ for all $i, 1 \le i \le n$. Hence, $f$ is setla labeling and induces $3$ colors. Therefore, $\se(K_{2,n}) \le 3$. Hence, $\se(K_{2,n}) = 3$.  
\end{proof}
Then we have the following question:\\
\begin{problem}
Determine the super edge total local antimagic chromatic number of the complete bipartite graph $K_{m,n}$, where $ m \not \equiv n (\bmod\ 2)$.  
\end{problem}
\begin{theorem}
For a bi-star $\se(B_{m,n}) = n+2$.
\end{theorem}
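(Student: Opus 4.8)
The plan is to prove the matching bounds $n+2 \le \se(B_{m,n}) \le n+2$, assuming as usual that $1 \le m \le n$ and recalling that $|V(B_{m,n})| = m+n+2$ and $|E(B_{m,n})| = m+n+1$, so a setla labeling places $\{1, \dots, m+n+1\}$ on the edges and $\{m+n+2, \dots, 2m+2n+3\}$ on the vertices. For the lower bound, Proposition~\ref{th:se-leaves} already gives $\se(B_{m,n}) \ge n+1$ since $y$ carries $n$ pendent vertices. To push this to $n+2$, I would argue that in any setla labeling $f$ the $n+1$ vertices $y, y_1, \dots, y_n$ necessarily receive $n+1$ distinct colours: the weights $w(y_j) = f(y) + f(yy_j)$ are pairwise distinct because the edge labels are, and $w(y)$ differs from each $w(y_j)$ since $f$ is a proper colouring. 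Then I would observe that $w(x)$ cannot fall into this palette: $w(x) \ne w(y)$ by adjacency, while $w(x) - f(y) = f(xy) + \sum_{i=1}^{m}\big(f(x_i) + f(xx_i)\big) \ge f(x_1) \ge m+n+2 > m+n+1 \ge f(yy_j) = w(y_j) - f(y)$, so $w(x) > w(y_j)$ for every $j$. Hence $n+1$ colours are impossible and $\se(B_{m,n}) \ge n+2$.

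For the upper bound I would exhibit an explicit setla labeling using exactly $n+2$ colours, built so that the $x$-side pendent weights coincide with $m$ of the $n$ $y$-side pendent weights while both centre weights are so large that they are automatically distinct from all pendent weights. Concretely, set $f(xy) = m+n+1$, $f(xx_i) = i$ for $1 \le i \le m$, $f(yy_j) = m+j$ for $1 \le j \le n$, $f(y) = m+n+2$, $f(x) = 2m+n+2$, and distribute the remaining $m+n$ vertex labels over $x_1, \dots, x_m, y_1, \dots, y_n$. Then $w(x_i) = f(x) + i = 2m+n+2+i$ and $w(y_j) = f(y) + m + j = 2m+n+2+j$, so $\{w(x_i): 1 \le i \le m\} \subseteq \{w(y_j): 1 \le j \le n\}$; the centre weights $w(x) = (2m+2n+3) + \tfrac{m(m+1)}{2} + \sum_i f(x_i)$ and $w(y) = (3m+2n+3) + mn + \tfrac{n(n+1)}{2} + \sum_j f(y_j)$ each exceed every pendent weight (the largest of which is $2m+2n+2$ since $m \le n$), so they are new colours, and $w(x) \ne w(x_i)$, $w(y) \ne w(y_j)$ follow from the same size comparison. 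After checking $w(x) \ne w(y)$, the labeling is a valid setla labeling whose colour set is $\{w(y_1), \dots, w(y_n), w(x), w(y)\}$, of size $n+2$, which together with the lower bound gives $\se(B_{m,n}) = n+2$.

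The main obstacle — really the only delicate point — is the bookkeeping that makes $f$ a bijection with $f(E) = \{1, \dots, m+n+1\}$ \emph{and} forces the two pendent weight families to overlap \emph{and} guarantees $w(x) \ne w(y)$; this last condition is not automatic from the formulas above. I would handle it by noting that for a fixed total $\sum_i f(x_i) + \sum_j f(y_j)$ the quantity $w(x) - w(y)$ is an affine function of $\sum_i f(x_i)$, so among the $m$-subsets of the $m+n$ leftover labels at most one choice makes $w(x) = w(y)$; assigning, say, the $m$ largest leftover labels to the $x_i$ and, if a tie still occurs, swapping a single $x_i$-label with a single $y_j$-label breaks it. Finally I would verify the small degenerate case $m = n = 1$ (where $B_{1,1} = P_4$) directly, since there the leftover label set has only two elements.
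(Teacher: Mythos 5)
Your proposal is correct and follows essentially the same route as the paper: the identical lower-bound counting argument (the $n$ pendant weights on the $y$-side together with $w(y)$ are forced distinct, and $w(x)$ cannot equal any of them because $w(x)-f(y)$ involves a vertex label $\ge m+n+2$ while $w(y_j)-f(y)=f(yy_j)\le m+n+1$), and essentially the same upper-bound construction with $f(xx_i)=i$, $f(yy_j)=m+j$, $f(xy)=m+n+1$, overlapping pendant weights and large centre weights. The only difference is cosmetic: the paper fixes all vertex labels explicitly (so that $w(x)\ne w(y)$ holds automatically, as the two closed-form weights differ by an odd number), whereas you leave the pendant vertex labels flexible and secure $w(x)\ne w(y)$ by a swap argument, which is equally valid.
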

\begin{proof}
Suppose $m \le n$. By the Theorem \ref{th:se-leaves}, $n+1 \le \se(B_{m,n})$. First we show that $\se(B_{m,n}) > n+1$. Let $f$ be any setla labeling of $B_{m,n}$. We know that weights of all pendent vertices $y_i$ are distinct and $w(y) \ne w(y_i)$ for any $i,~1\le i \le n$. If $w(x) = w(y_i)$ for any $i$, then we obtain $f(x) + f(xx_i) = f(x) + f(xy) + \sum_{i=1}^{n}(f(y_i) + f(yy_i))$. This implies, $f(xx_i) = f(xy) + \sum_{i=1}^{n}(f(y_i) + f(yy_i))$ which is not possible. Therefore, $\se(B_{m,n}) \ge n+2$. This proves the lower bound. For an upper bound, we define super edge total labeling $f$ of $B_{m,n}$ as:
\begin{align*}
f(x) &= 2m + 2n + 3\\
f(y) &= m + 2n + 3\\
f(xy) &= m + n + 1\\
f(x_i) &= m + n +1 + i  &\text{ if } 1 \le i \le m\\
f(xx_i) &= i &\text{ if } 1 \le i \le m\\
f(yy_i) &= m + i &\text{ if } 1 \le i \le n
\end{align*}
and we label the vertices $y_i$ by $\{2m+n+2, 2m+n+3, \dots, 2m+2n+2\} - \{m+2n+3\}$ in any manner. The sum of these $y_i$ labels is $\displaystyle \frac{3n^2+4mn+2m+3n-2}{2}$. Now we calculate the weights:
\begin{align*}
	w(x) &= f(y) + f(xy) +  \sum_{i=1}^{m} (f(x_i) + f(xx_i))\\
	        &=  (m+2n+3) +  (m+n+1) + \sum_{i=1}^{m} (m+n+1+2i)\\
	        &= 2m^2+mn+4m+3n+4.\\
   w(y) &= f(x) + f(xy) +  \sum_{i=1}^{n} f(y_i) + \sum_{i=1}^{n}f(yy_i)\\
           &= (2m+2n+3) + (m+n+1) + \frac{3n^2+4mn+2m+3n-2}{2} + \sum_{i = 1}^{n} (m+i)\\
           &= 2n^2+3mn+4m+5n+3.
\end{align*}
The weights of pendent vertices: $w(x_i) = f(x) + f(xx_i) = 2m+2n+3+i$ and $w(y_i) = f(y) + f(yy_i) = 2m+2n+3+i$. Thus $f$ is a required setla which induces $n+2$ colors. Hence, $\se(B_{m,n}) = n+2$.
\end{proof}
\begin{figure}[ht]
    \centering
    \includegraphics[scale = 0.5]{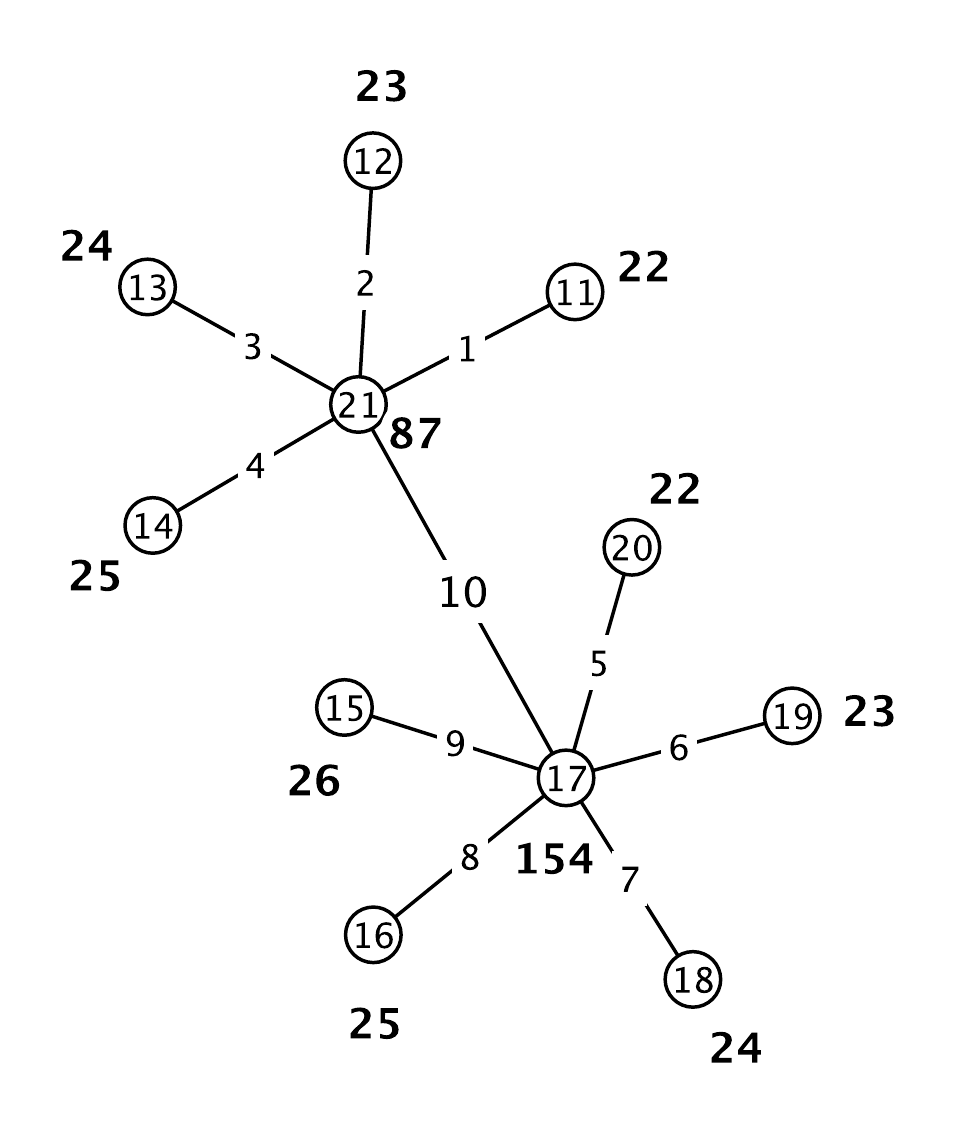}
    \caption{setla labeling of bi-star.}
\end{figure}
\begin{theorem}
If $G$ is $r$-regular graph then $\se(G \circ K_1) \le \se(G) + 1$.
\end{theorem}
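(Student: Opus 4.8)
The plan is to mirror the proof of Theorem~\ref{th:sv-corona}. Start from an optimal setla labeling $f$ of $G$ realising $\se(G)$ colours. Since $f$ is super edge, $f(E(G))=\{1,\dots,m\}$ and $f(V(G))=\{m+1,\dots,m+n\}$, where $n=|V(G)|$ and $m=|E(G)|$; after a relabelling of the vertices I may assume $f(v_i)=m+i$. The idea is to leave the edge labels of $G$ untouched and to attach the $n$ leaves and pendant edges of $H:=G\circ K_1$ in such a way that the weight of every original vertex is shifted by one fixed amount, while all leaves receive a common weight.

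Writing $x_1,\dots,x_n$ for the leaves and $e_i=v_ix_i$ for the pendant edges, I would define a labeling $g$ of $H$ by $g(e)=f(e)$ for $e\in E(G)$, $g(e_i)=m+n+1-i$, $g(x_i)=m+n+i$, and $g(v_i)=m+2n+i$, for $1\le i\le n$. A direct check shows that $g$ is a bijection onto $\{1,\dots,m+3n\}=\{1,\dots,|V(H)|+|E(H)|\}$ with $g(E(H))=\{1,\dots,m+n\}$, so $g$ is a super edge total labeling of $H$.

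The heart of the argument is the pair of identities $g(v_i)+g(e_i)=2m+3n+1$ and $g(x_i)+g(e_i)=2m+2n+1$, each independent of $i$; this is exactly the cancellation exploited in Theorem~\ref{th:sv-corona}. The first identity gives $w_g(x_i)=2m+3n+1$ for every leaf. For an original vertex $v_i$, $r$-regularity yields $\sum_{v_j\in N_G(v_i)}g(v_j)+\sum_{e\in E(G),\,e\ni v_i}g(e)=2rn+w_f(v_i)$, so the second identity gives $w_g(v_i)=w_f(v_i)+(2rn+2m+2n+1)$, a shift of $w_f(v_i)$ by a constant. Consequently the weights on $\{v_1,\dots,v_n\}$ induce the same colour classes as $f$ did on $G$, that is, $\se(G)$ colours; and since $w_g(v_i)-w_g(x_i)=w_f(v_i)+n(2r-1)>0$ (because $r\ge 1$ and $w_f(v_i)>0$), all leaves receive one further colour, distinct from every original-vertex colour. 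The colouring is proper: for $v_iv_j\in E(G)$ we have $w_g(v_i)\ne w_g(v_j)$ since $w_f(v_i)\ne w_f(v_j)$ ($f$ being setla on $G$), and $w_g(v_i)\ne w_g(x_i)$ by the displayed inequality. Hence $g$ induces a proper $(\se(G)+1)$-colouring of $H$, and $\se(G\circ K_1)\le\se(G)+1$.

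I expect the only point requiring genuine care to be the bookkeeping that $g$ is truly \emph{super edge}, namely that the pendant edges occupy precisely $\{m+1,\dots,m+n\}$ while the labels $g(v_i)$ and $g(x_i)$ together fill $\{m+n+1,\dots,m+3n\}$, together with the verification that the common leaf weight $2m+3n+1$ collides with no $w_g(v_i)$; both reduce to the trivial inequalities $r\ge 1$ and $w_f(v_i)>0$, which hold since $G$ has no isolated vertices.
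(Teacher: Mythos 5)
Your proposal is correct and follows essentially the same construction as the paper: keep the $G$-edge labels, give the pendant edges the labels $m+n+1-i$ so that the relevant pairwise sums are constant, and shift the original vertex labels by a fixed amount so that every $w_g(v_i)$ is $w_f(v_i)$ plus a constant. The only (inessential) difference is that you swap which new label block goes to the leaves versus the original vertices (shift $2n$ instead of $n$), which changes nothing in the argument.
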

\begin{proof}
Let $G$ be an $r$-regular graph with $V(G) = \{v_1, v_2, \dots, v_n\}$ and $m$ edges. Let $f$ be setla labeling of $G$. Without loss of generality, we assume that $f(v_i) = m + i$, where $ 1 \le i \le n$. Let $\{x_1, x_2, \dots, x_n\}$ be newly added vertices to obtain $H = G \circ K_1$ such that there is an edge $e_{i} = v_ix_i$ for each $i,~ 1 \le i \le n$. We define super edge total labeling $g$ of $H$ as follow:\\
\begin{align*}
&g(v_i) = f(v_i) + n = m + n + i &(1 \le i \le n)\\
&g(x_i) = m + 2n + i &(1 \le i \le n)\\
&g(e_i) = m + n + 1 -i &(1 \le i \le m)\\
&g(e) = f(e) & \forall e \in E(G).
\end{align*}
 Now we calculate the weight of each vertex in $G \circ K_1$ induced by $g$.
For each $i,~ 1\le i \le n$, $w_g(x_i) = g(v_i) + g(e_{i}) = 2m + 2n + 1$ and $g(x_i) + g(e_i) = 2m + 3n + 1$. For any vertex  $v_i \in V(G)$, 
\begin{align*}
w_g(v_i) &= \sum_{x \in NT_H(v_i)}g(x)\\
         &= g(x_i) + g(e_i) + \sum_{x \in NT_G(v_i)}g(x)\\
         &= 2m+3n+1 + \sum_{x \in NT_G(v_i)}g(x)\\
         &= 2m+3n+1 + \sum_{u \in N_G(v_i)}g(u) + \sum_{uv_i \in E(G)}g(uv_i)\\
         &= 2m+3n+1 + \sum_{u \in N_G(v_i)}(n+f(u)) + \sum_{uv_i \in E(G)}  f(uv_i)\\
         &= 2m+3n+1 + nr + \sum_{u \in N_G(v_i)}f(u) + \sum_{uv_i \in E(G)}f(uv_i)\\
         &= 2m+3n+1 + nr + w_f(v_i)
\end{align*}
is independent of $i$. This proves that for any $i \ne j$, $w_g(v_i) \ne w_g(v_j)$ if and only if $w_f(v_i) \ne w_f(v_j)$. Hence, $g$ is a setla labeling of $G \circ K_1$ and $\se(G \circ K_1) \le \se(G) + 1$.
\end{proof}
We pose the following problems:\\
\begin{problem}
    Characterize graphs whose $\se(G) = \chi(G)$.
\end{problem}
\begin{problem}
    Characterize graphs whose $\se(G \circ K_1) = \se(G) +1$.
\end{problem}
\begin{problem}
Let $G$ and $H$ be super edge total local antimagic graphs. Determine the $\se(G \circ H)$.
\end{problem}
\begin{conjecture}
Every graph without isolated vertices admits super vertex total local antimagic labeling.
\end{conjecture}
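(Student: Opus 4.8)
\medskip
\noindent\textbf{A plan of attack.} Since this is an open problem, what follows is a proposed line of attack rather than a proof. Fix $|V|=n$ and $|E|=m$. A super vertex total local antimagic labeling is exactly a pair of bijections $g\colon V\to\{1,\dots,n\}$ and $h\colon E\to\{n+1,\dots,n+m\}$ for which adjacent vertices receive distinct weights; since the label of a common edge cancels, the condition to be \emph{avoided} for an edge $uv$ is
\[
\sum_{\substack{e\ni u\\ e\ne uv}}h(e)-\sum_{\substack{e\ni v\\ e\ne uv}}h(e)=c_v-c_u,\qquad c_x:=\sum_{w\in N(x)}g(w).
\]
So, once $g$ is fixed, the task is to choose the edge bijection $h$ missing one prescribed value of a $\{0,\pm1\}$-combination of edge labels, one forbidden equation per edge. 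The first thing I would note is that this is automatic on many edges: if $uv$ is an edge of a $K_2$ component then $g(u)\ne g(v)$ already separates its ends, and if $v$ is pendant with $\deg(u)\ge 2$ then the right side above is $c_v-c_u\le g(u)-1\le n-1$ while the left side is a sum of at least one edge label, hence $\ge n+1$; thus no pendant edge is ever violated. Only edges with both ends of degree $\ge 2$ need control, and for each such edge the left side is a signed sum of $D:=\deg(u)+\deg(v)-2\ge 2$ distinct labels.

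For trees the plan is an induction on the number of leaves, in the spirit of the explicit constructions already carried out here for stars, paths, bi-stars and subdivided stars: delete a leaf $v$ with neighbour $u$ of degree $\ge 2$, apply the hypothesis to the smaller tree, shift labels to open up room, and re-attach $v$ and $uv$ with the two largest free labels; the computations for $B_{m,n}$ and $S_{n,t}$ are models of the re-attachment step. For connected $G$ with $n\ge 3$ I see two general routes. The algebraic one: the polynomial
\[
P\bigl((x_e)_{e\in E}\bigr)=\prod_{uv\in E}\bigl(c_u-c_v+\textstyle\sum_{e\ni u}x_e-\sum_{e\ni v}x_e\bigr)
\]
has degree $m$, and a bijective nonvanishing assignment of the $x_e$ from an $m$-set is what we want; multiplying by a Vandermonde factor to force distinctness and working over $\mathbb{Z}_p$ for a large prime $p$ — exactly as in the Combinatorial-Nullstellensatz proofs of partial cases of the antimagic conjecture — turns this into a coefficient computation, with the choice of $g$ still available to make the coefficient nonzero. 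The probabilistic one: adapt the argument that settled the edge version (every connected graph other than $K_2$ has a local antimagic labeling) — take $h$ at random, bound each bad event by anti-concentration for signed sums of $D\ge 2$ distinct labels, and apply the Lov\'asz Local Lemma, tuning $g$ to absorb the few surviving collisions.

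A cleaner potential reduction: for connected $G$ with $n\ge 3$, take any local antimagic edge labeling $f_E\colon E\to\{1,\dots,m\}$ (known to exist since $G\ne K_2$) and set $h=f_E+n$. With $W(x)=\sum_{e\ni x}f_E(e)$ one gets, for an edge $uv$, $w(u)-w(v)=(W(u)-W(v))+n(\deg u-\deg v)+(c_u-c_v)$, so it is enough to pick the vertex bijection $g$ with $c_u-c_v\ne W(v)-W(u)-n(\deg u-\deg v)$ for every edge — a \emph{prescribed} local-antimagic-type condition, now on the vertex labels. This trades the conjecture for the question of whether the proof of the local antimagic conjecture is robust enough to meet an arbitrary list of $m$ forbidden values of the differences $c_u-c_v$; adapting it should also cover the disconnected case, after distributing the label intervals over the components with the necessary care about the degree-dependent shifts of the weights.

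\medskip
\noindent\textbf{The main obstacle} is the very thing that keeps the antimagic conjecture open, and that makes even the reduction above nontrivial: for graphs of small minimum degree anti-concentration is weak — on a cycle every edge has $D=2$ and its bad event has probability about $\tfrac12$, so neither the union bound nor a naive Local Lemma closes — while the Combinatorial-Nullstellensatz coefficient is intractable in general, so the explicit modular constructions used in this paper for sparse families and the algebraic/probabilistic machinery for dense graphs have to be unified somehow. I would therefore expect a proof to go in stages — all trees, then all graphs with $\delta(G)\ge 2$ using a distinguished spanning subgraph (a spanning tree, or ears) to carry the large labels, then dense graphs — exploiting what makes this problem easier than the antimagic one, namely that the constraint is local and that one controls the vertex labels as well. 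A complete proof I would regard as genuinely hard.
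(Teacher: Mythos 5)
The statement you are addressing is posed in the paper as a conjecture — the paper contains no proof of it, and your text, by its own admission, is a plan of attack rather than a proof, so there is nothing here that settles the statement. Your preliminary reductions are sound: the label of the common edge $uv$ cancels in $w(u)-w(v)$, so a super vertex total local antimagic labeling amounts to choosing the edge bijection $h$ so that, for each edge, one prescribed value of a signed sum of the other incident edge labels is avoided; edges of a $K_2$ component and pendant edges are automatically safe (your bound $c_v-c_u\le n-1 < n+1\le$ left-hand side is correct); and the reformulation via an existing local antimagic edge labeling $f_E$ with $h=f_E+n$ is algebraically right. But none of this is a proof, and the three routes you sketch each stop exactly where the difficulty begins.

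Concretely: (i) in the proposed tree induction, the "shift labels to open up room and re-attach" step is not innocuous — inserting a new vertex changes $n$, so all edge labels must move up by at least one, and this perturbs every vertex weight by a degree-dependent amount; the paper's own corona result ($\sv(G\circ K_1)\le \sv(G)+1$) needs $G$ regular precisely because such shifts add $2nr$ only when all degrees are equal, so the inductive hypothesis does not transfer to a non-regular smaller tree without a new argument. (ii) The Combinatorial Nullstellensatz route requires showing a specific coefficient of your polynomial $P$ is nonzero; no such computation is even outlined, and the freedom in $g$ is not converted into anything checkable. (iii) The probabilistic route is likewise not executed, and your stated obstacle is miscalibrated: for a cycle edge the bad event is $h(e_1)-h(e_2)=c_v-c_u$ for two distinct uniformly random labels, which has probability on the order of $1/m$, not $1/2$, so the claim that the union bound or Local Lemma "does not close" there is itself unsubstantiated — this part of the analysis would need to be done honestly in either direction. (iv) The "cleaner reduction" trades the conjecture for a strengthened, prescribed-difference version of the local antimagic theorem, which is not known and is not shown to follow from existing proofs. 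In short, the observations are correct but the statement remains exactly as open after your text as before it; if you pursue this, the tree case with a careful treatment of the degree-dependent shift, or the $\delta(G)\ge 2$ case via a spanning structure carrying the large labels, would be the natural first theorems to actually prove.
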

\begin{conjecture}
Every graph without isolated vertices admits super edge total local antimagic labeling.
\end{conjecture}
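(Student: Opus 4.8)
\medskip
\noindent The plan for approaching this conjecture is as follows; it sits inside the circle of ideas around the antimagic labeling conjecture of Hartsfield and Ringel, which is still open, so a complete proof is expected to require real work. One point in our favour is that the statement looks genuinely easier than antimagicness: here the vertex labels occupy the \emph{large} block $\{|E|+1,\dots,|E|+|V|\}$ and feed into $w(v)=\sum_{u\in N(v)}f(u)+\sum_{e\ni v}f(e)$ mainly through the neighbour term $S_V(v):=\sum_{u\in N(v)}f(u)$, which therefore swamps the incident-edge term $S_E(v):=\sum_{e\ni v}f(e)$ in size. First I would reduce to connected graphs --- with a little care about how the label blocks of different components interact, since shifting the edge labels of one component changes each of its vertex weights by a multiple of the corresponding degree --- and dispose of the tiny cases by hand; note that $K_2$ already admits such a labeling, since with a single edge the two vertex weights automatically differ. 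For a connected $G$ on $n\ge 3$ vertices the task then reduces to fixing a convenient edge labeling and using the remaining freedom in the vertex bijection to kill every potential conflict.

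Concretely, after fixing the edge labeling we must arrange, for every edge $uv$,
\[
S_V(u)-S_V(v)\;\ne\;S_E(v)-S_E(u),
\]
where the right-hand side is a fixed integer (of absolute value at most $|E|\cdot\deg_{\max}$) and the left-hand side is controlled by a bijection from $V$ onto an interval of length $n-1$. I would try a rooted spanning tree $T$ of $G$: process the vertices in a breadth-first order and assign the labels $|E|+1,\dots,|E|+|V|$ so that across each tree edge the jump in $S_V$ avoids the prescribed target $S_E(v)-S_E(u)$, the non-tree edges then contributing only finitely many further forbidden differences which one hopes to absorb because the block of available vertex labels is wide and each transposition of two labels moves many of the $S_V$ values at once. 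A cleaner, and probably more promising, route is algebraic: apply the Combinatorial Nullstellensatz to the polynomial $\prod_{uv\in E}\bigl(w(u)-w(v)\bigr)$ times a permanent-type factor forcing $f$ to restrict to a bijection onto the top block; this is essentially the mechanism that has been made to work for antimagic labelings of regular and of bipartite graphs, and the extra additive slack coming from the large vertex block should only help.

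The main obstacle is precisely the one that has kept the antimagic conjecture open: one must satisfy $\Theta(|E|)$ linear non-equalities simultaneously under a single global bijectivity constraint, and no local-to-global device is known to do this for arbitrary graphs. I do not expect a short argument. A realistic intermediate target is to prove the conjecture for all graphs of minimum degree at least two, or for all graphs containing a Hamiltonian path, where the neighbour-sum term is easier to steer; the families settled in this paper --- paths, cycles, complete graphs, complete bipartite graphs, bi-stars, subdivided stars and coronas --- should be read as evidence accumulating in favour of the full statement.
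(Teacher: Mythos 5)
This statement is posed in the paper as a \emph{conjecture}, not a theorem: the authors offer no proof, and the problem is open. Your submission is, by your own admission, a research programme rather than a proof, so the comparison reduces to pointing out that the essential content is missing. Neither of your two proposed mechanisms is carried out: the spanning-tree/BFS scheme never establishes that one can simultaneously avoid all $\Theta(|E|)$ forbidden differences $S_E(v)-S_E(u)$ under the single global bijectivity constraint on the vertex labels (the non-tree edges impose constraints on pairs of vertices whose labels were fixed at earlier, unrelated stages of the BFS, and ``one hopes to absorb'' them is exactly the step that has no argument behind it); and the Combinatorial Nullstellensatz route requires exhibiting a nonzero coefficient of a suitable monomial in $\prod_{uv\in E}\bigl(w(u)-w(v)\bigr)$ restricted to bijections onto the top block, which is precisely the hard combinatorial core and is not attempted. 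The heuristic that the vertex labels lie in the large block $\{|E|+1,\dots,|E|+|V|\}$ and hence ``swamp'' the edge contribution also does not by itself separate weights of adjacent vertices, since adjacent vertices of equal degree receive neighbour-sums drawn from the same block and their difference can easily land on the forbidden value.

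What is sound in your write-up is modest: the observation that $K_2$ admits a setla labeling, the correct formulation of the conflict conditions $S_V(u)-S_V(v)\ne S_E(v)-S_E(u)$, and the (sensible) suggestion to first target minimum degree at least two or Hamiltonian-path graphs as intermediate cases. But as it stands there is no proof of any new case of the conjecture, let alone the full statement, so the submission cannot be accepted as a proof; at best it is a discussion of plausible attack routes whose central difficulties are acknowledged rather than resolved.
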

\section{Conclusion}
We studied and calculated the super vertex (edge) total local antimagic chromatic numbers for a few families of graphs like path, cycle, bipartite graphs etc.
\bibliographystyle{amsplain}
\bibliography{refs}
\end{document}